\theoremstyle{plain}
\newtheorem{theorem}{Theorem}[section]
\newtheorem{thm}[theorem]{Theorem}
\newtheorem{prob}[theorem]{Problem}
\theoremstyle{definition}
\newtheorem{ex}[theorem]{Example}
\newtheorem{OQ}{Open Question}
\theoremstyle{remark}
\newcommand{\R}{\mathbb{R}}
\newcommand{\C}{\mathbb{C}}
\newcommand{\T}{\mathbb{T}}
\newcommand{\D}{\mathbb{D}}
\newcommand{\N}{\mathbb{N}}
\subjclass[2010]{Primary 30Hxx; Secondary 30Jxx. Key words: optimal polynomial approximants, digital filters, Dirichlet spaces, reproducing kernels, orthogonal polynomials}
\begin{document}
\title[Survey of optimal approximants]{A Survey of optimal polynomial approximants, applications to digital filter design, and related open problems}
\author[B\'en\'eteau]{Catherine B\'en\'eteau}
\address{Department of Mathematics, University of South Florida, 4202 E. Fowler Avenue,
Tampa, Florida 33620-5700, USA.} \email{cbenetea@usf.edu}
\author[Centner]{Raymond Centner}
\address{Department of Mathematics, University of South Florida, 4202 E. Fowler Avenue,
Tampa, Florida 33620-5700, USA.} \email{rcentner@usf.edu}
%\date{\today}

\begin{abstract}
In the last few years, the notion of optimal polynomial approximant has appeared in the mathematics literature in connection with Hilbert spaces of analytic functions of one or more variables.  In the 70s, researchers in engineering and applied mathematics introduced least-squares inverses in the context of digital filters in signal processing. It turns out that in the Hardy space $H^2$ these objects are identical. This paper is a survey of known results about optimal polynomial approximants.  In particular, we will examine their connections with orthogonal polynomials and reproducing kernels in weighted spaces and digital filter design.  We will also describe what is known about the zeros of optimal polynomial approximants, their rates of decay, and convergence results.  Throughout the paper, we state many open questions that may be of interest.   
\end{abstract}

\maketitle

\section{Introduction}\label{Intro}

Consider a collection of Hilbert spaces of analytic functions of the unit disk $\D$ defined as follows.  Given weights
$\{\omega_k\}_{k \in \N}$ satisfying $\omega_0=1$, $\omega_k >0$
and $\omega_k/\omega_{k+1} \rightarrow 1$ as $k \rightarrow \infty$,
we define the weighted Hardy space $H^2_\omega$ as the space of functions  $f(z)=\sum_{k=0}^{\infty} a_k z^k$ analytic in $\D$ such that
\begin{equation*} 
	\|f\|^2_\omega= \sum_{k=0}^{\infty} |a_k|^2 \omega_k < \infty.
\end{equation*}
$H^2_\omega$ is a Hilbert space with inner product
\begin{equation*}
	\langle f, g \rangle_\omega = \sum_{k=0}^{\infty} a_k \overline{b_k} \omega_k,
\end{equation*}
for $g(z)=\sum_{k=0}^{\infty} b_k z^k.$  If $\omega_k = (k+1)^{\alpha}$ for a real parameter $\alpha,$ the spaces $H^2_\omega= {\mathcal{D}}_{\alpha}$ are called Dirichlet-type spaces and 
include the Hardy space $H^2 = \mathcal{D}_0,$ the classical Dirichlet space $\mathcal{D}_1 = \mathcal{D},$ and the Bergman space $\mathcal{D}_{-1} =A^2.$ In the case of the Hardy space $H^2$, we omit the subscript $\omega$ and write $\langle\cdot,\cdot\rangle$ for the inner product. Given a function $f \in H^2_\omega$ not identically $0$ and $n \in \N,$ we define the $n$-th optimal polynomial approximant (opa) of $1/f$ in $H^2_\omega$ to be the polynomial $q_n$ that minimizes
$\|pf-1\|_{\omega}$ among all polynomials $p$ of degree at most $n.$ Notice that if we denote by $\mathcal{P}_n$ to be the set of all polynomials of degree at most $n$, then $f \cdot\mathcal{P}_n$ is a finite dimensional subspace of $H^2_\omega$, and $q_n f$ is simply the projection of $1$ onto that subspace.  Therefore, $q_n$ always exists and is unique. 

These polynomials were defined in \cite{BCLSS} in the spaces $\mathcal{D}_{\alpha}$ in connection with cyclicity: a function $f \in H^2_\omega$ is called cyclic if its polynomial multiples are dense in the space.  In particular, if $f$ is cyclic, there must be a sequence of polynomial multiples of $f$ that approximate $1$, and thus, the opas $q_n$ are the ones that achieve this approximation at the fastest rate in terms of the degree. On the other hand, if $1$ can be approximated by polynomial multiples of $f$, then so can any polynomial, and since polynomials are dense in $H^2_\omega$, $f$ must be cyclic.  Therefore $f$ is cyclic if and only if the opas $q_n$ satisfy $\|q_nf-1\|_\omega \rightarrow 0$ as $n \rightarrow \infty.$  Thus, one can view cyclicity through the lens of the behavior of the optimal polynomial approximants.

The authors of \cite{BCLSS} were hoping to be able to shed some light on the Brown and Shields conjecture (see \cite{BS}) using opas.  The Brown and Shields conjecture states that for a function $f$ in the classical Dirichlet space $\mathcal{D},$ if $f$ is outer and has boundary zero set $Z(f)$ with logarithmic capacity $0$, then $f$ is cyclic.  (The converse is known to be true.) Although this initial hope has not yet proved successful, many other interesting problems related to optimal polynomial approximants  arose, in particular in connection with inner functions (see, e.g., \cite{Remarks,Fe,SecoTellez}).

It turns out that optimal polynomial approximants had been introduced much earlier in the engineering literature in the 70s, in connection with signal processing and digital filter construction.  They were first introduced by Robinson (\cite{Ro}) and referred to as least-squares inverses. Robinson wanted to find the inverse under discrete convolution of a finite-length minimum-delay wavelet that best approximates the unit spike.  From a mathematical point of view, this meant, given a finite sequence $b:=(b_0,b_1, \ldots, b_n)$ of real numbers, where the largest coefficient in magnitude is $b_0$ (the so-called ``minimum delay"), find a ``wavelet", i.e., a sequence $a:= (a_0,a_1, \ldots, a_m)$ of real numbers such that the difference between the convolution $a*b$ and the unit spike $(1,0,\ldots, 0)$ has smallest $\ell^2$-norm. This is equivalent to the problem of finding the optimal polynomial approximant of $1/f$ for a polynomial $f$ with $f(0)\neq 0$. In \cite{Ch}, Chui examined the \emph{double least-squares inverse} of a polynomial $f$ of degree $n$ by first finding the $k$-th opa $q_k$ of $1/f$ and then finding the $n$-th opa $p_{n,k}$ of $1/q_k$. He studied the zeros of the double least-squares inverses, noticed the connection with orthogonal polynomials (which had also been discussed for polynomials with real coefficients in \cite{GK}), and proved that $p_{n,k}$ converges to $f$ in the finite-dimensional space of polynomials of degree $n$ if and only if $f$ has no zeros in the open unit disk. He went on to discuss some generalizations to the case when $f \in H^2,$ which was further studied by Izumino in \cite{Iz}. In several variables, least-squares inverses were studied by a number of authors in the 70s in connection with recursive digital filters (see, e.g., \cite{GK,STJ}). Two recent papers of Sargent and Sola (\cite{SS,SS2}) discuss some of these developments and examine the more complicated relationship between optimal polynomial approximants and orthogonal polynomials in several variables.

This paper is a survey of some of the known results about optimal polynomial approximants, applications to digital filter theory, and related open problems.  One of the goals of the paper is, as also discussed in \cite{SS}, to connect the two sets of literature on these topics. In Section 2, we describe how to compute opas, give some basic examples, and explain how opas are connected to orthogonal polynomials and reproducing kernels for weighted spaces.  In Section 3, we discuss how opas were used in the design of digital filters in the engineering literature of the 70s.  In Section 4, we survey some known results about the zeros of opas in the single and multivariable case and state several open questions.  Finally, in Section 5, we discuss rates of decay and pointwise convergence results. 

\section{Existence of opas and connections with orthogonal polynomials and reproducing kernels}

Let us now turn to a discussion of how to compute opas and their connection with classical objects in analysis.  We have already seen that for any function $ f \in H^2_\omega\setminus\{0\},$ and any $ n \in \N,$ the $n$-th opa $q_n$ always exists.  It is also not hard to see that the coefficients of $q_n$ can be computed by a set of linear equations (\cite{BCLSS,FMS}), as stated in the following theorem. 

\begin{thm}\label{LinearSys}
For any function $f\in H^2_\omega\setminus\{0\}$, let $q_n(z)=\sum_{j=0}^{n}a_jz^j$ denote the $n$-th opa of $1/f$. Define the vectors $a=\begin{bmatrix}
a_0&a_1&\dots&a_n
\end{bmatrix}^T$ and $y=\begin{bmatrix}
\overline{f(0)}&0&\dots&0
\end{bmatrix}^T$. If $B$ is the $(n+1)\times(n+1)$ matrix given by $B_{jk}=\langle z^kf,z^jf\rangle_\omega$, where $0\leq j,k\leq n$, then $$Ba=y.$$ 
\end{thm}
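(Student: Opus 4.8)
The plan is to exploit the characterization of $q_n$ already recalled in the text: $q_n f$ is the orthogonal projection of the constant function $1$ onto the finite-dimensional subspace $f\cdot\mathcal{P}_n$ of $H^2_\omega$. Equivalently, $q_n$ is the unique polynomial of degree at most $n$ for which the error $q_n f - 1$ is orthogonal to $f\cdot\mathcal{P}_n$. Since $\{z^0 f, z^1 f, \dots, z^n f\}$ spans $f\cdot\mathcal{P}_n$, this orthogonality is equivalent to the $n+1$ scalar equations $\langle q_n f - 1, z^j f\rangle_\omega = 0$ for $j = 0, 1, \dots, n$.

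First I would write $q_n(z) = \sum_{k=0}^n a_k z^k$, so that $q_n f = \sum_{k=0}^n a_k z^k f$, and substitute into each orthogonality equation. By linearity of the inner product in its first argument,
\begin{equation*}
\langle q_n f, z^j f\rangle_\omega = \sum_{k=0}^{n} a_k \langle z^k f, z^j f\rangle_\omega = \sum_{k=0}^{n} B_{jk}\, a_k = (Ba)_j.
\end{equation*}
Thus the system becomes $(Ba)_j = \langle 1, z^j f\rangle_\omega$ for each $j$, and it remains only to identify the right-hand side with the vector $y$.

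Next I would compute $\langle 1, z^j f\rangle_\omega$. Writing $f(z) = \sum_{i=0}^\infty c_i z^i$, we have $z^j f(z) = \sum_{i=0}^\infty c_i z^{i+j}$, whose constant term is $0$ when $j \geq 1$ and is $c_0 = f(0)$ when $j = 0$. Since $1 = \sum_{i} \delta_{i0} z^i$, the definition of $\langle\cdot,\cdot\rangle_\omega$ together with $\omega_0 = 1$ gives $\langle 1, z^j f\rangle_\omega = \overline{c_0}\,\delta_{j0} = \overline{f(0)}\,\delta_{j0}$, which is precisely the $j$-th entry of $y$. Combining the two computations yields $Ba = y$, as claimed.

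This argument has no real obstacle; the only point requiring a word of care is the justification that the orthogonality conditions against the spanning set $\{z^j f\}_{j=0}^n$ are equivalent to orthogonality against the whole subspace $f\cdot\mathcal{P}_n$ — which is immediate from linearity and conjugate-linearity of the inner product — and the observation that $B$ is exactly the Gram matrix of this spanning set, hence the system $Ba = y$ indeed has a (unique) solution since $f \not\equiv 0$ forces $\{z^j f\}$ to be linearly independent and $B$ to be invertible. I would include this last remark parenthetically, since uniqueness of $q_n$ was already established in the text.
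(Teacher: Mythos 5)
Your proof is correct and follows essentially the same route as the paper's: both characterize $q_nf$ as the orthogonal projection of $1$ onto $f\cdot\mathcal{P}_n$, expand the orthogonality conditions $\langle q_nf-1, z^jf\rangle_\omega = 0$ by linearity, and identify $\langle 1, z^jf\rangle_\omega = \delta_{j0}\overline{f(0)}$. Your added remarks on the invertibility of the Gram matrix $B$ are a harmless (and correct) bonus.
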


\begin{proof}
Note that by definition, $q_n$ is the projection of $1$ onto $\mathcal{P}_n \cdot f$.  By orthogonality in $H^2_\omega$, we thus have that
\begin{align*}
    0&=\langle q_nf-1,z^jf\rangle_\omega\\
    &=\langle\sum_{k=0}^na_kz^kf-1,z^jf\rangle_\omega\\
    &=\sum_{k=0}^na_k\langle z^kf,z^jf\rangle_\omega-\langle 1,z^jf\rangle_\omega\\
    &=\sum_{k=0}^na_k\langle z^kf,z^jf\rangle_\omega-\delta_{j,0}\overline{f(0)}
\end{align*}
for $j=0,\cdots,n$.  This is equivalent to the assertion $Ba=y$.

\end{proof}

Let's have a look at an example for $H^2$ in which the form of the matrix is simple enough to allow us to determine an explicit formula for the coefficients. 

\begin{ex}\label{1-z}

Let $q_n$ denote the $n$-th opa of $1/f$, where $f(z)=1-z$. Then $$q_n(z)=\sum_{j=0}^n\frac{n+1-j}{n+2}z^j.$$ 

\end{ex} 

\begin{proof} 
Let $q_n(z)=\sum_{j=0}^{n}a_jz^j$ and let $a$ and $y$ denote the vectors $a=
\begin{bmatrix}
a_0&a_1&a_2&\dots&a_n
\end{bmatrix}^T$ 
and $y=
\begin{bmatrix}
1&0&0&\dots&0
\end{bmatrix}^T$. 
From Theorem \ref{LinearSys}, we have 
\begin{equation*}
    Ba=y, 
\end{equation*}
where $B$ is the $(n+1)\times(n+1)$ matrix with
\begin{align*}
    B_{j,k}&=\langle z^kf,z^jf\rangle\\
    &=\langle z^k(1-z),z^j(1-z)\rangle\\
    &=\langle z^k-z^{k+1},z^j-z^{j+1}\rangle\\
    &=\langle z^k,z^j\rangle +\langle z^{k+1},z^{j+1}\rangle -\langle z^{k+1},z^j\rangle -\langle z^k, z^{j+1}\rangle
\end{align*}
for $0\leq j,k\leq n$. Therefore, we see that 
\[
B_{j,k}=
\begin{cases}
2 & \text{if $j=k$}\\
-1&\text{if $|j-k|=1$}\\
0&\text{otherwise}
\end{cases}
\]
and note that $a_0=1/2$ whenever $n=0$.  If $n=1$, we get
\[
\begin{cases}
2a_0-a_1=1\\
-a_0+2a_1=0.
\end{cases}
\]
For $n\geq 2$, the coefficients $\{a_j\}_{j=0}^n$ satisfy
\[
\begin{cases}
2a_0-a_1=1\\
-a_j+2a_{j+1}-a_{j+2}=0\qquad 0\leq j\leq n-2\\
-a_{n-1}+2a_n=0.
\end{cases}
\]
These conditions can be summarized by the matrix equation
\[
\begin{bmatrix}
2&-1&0&0&\dots&0\\
-1&2&-1&0&\dots&0\\
0&-1&2&-1&\dots&0\\
%0&0&-1&2&\dots&0\\

\vdots&&\ddots&\ddots&\ddots&\vdots\\

%0&\dots&2&-1&0&0\\
%0&\dots&-1&2&-1&0\\
0&\dots&0&-1&2&-1\\
0&\dots&0&0&-1&2
\end{bmatrix}
\begin{bmatrix}
a_0\\
a_1\\
a_2\\
a_3\\
\vdots\\
a_n
\end{bmatrix}
=
\begin{bmatrix}
1\\
0\\
0\\
0\\
\vdots\\
0
\end{bmatrix}.
\]
It is straightforward to check that $\{\frac{n+1-j}{n+2}\}_{j=0}^n$ satisfies these conditions.

\end{proof} 

This computation was generalized in \cite{FMS}, where the authors show that the opa $q_n$ of $1/(1-z)$ in $H^2_{\omega}$ is equal to 
\[ q_n (z) = \sum_{k=0}^n \left( 1 - \frac{\sum_{j=0}^k \frac{1}{\omega_j}}{\sum_{j=0}^{n+1} \frac{1}{\omega_j}} \right). \] In $H^2,$ formulas for the opa of $1/(1-z)^a$ where $Re(a) >0$ are obtained in \cite{blSimanek}, but in general, it is very difficult to calculate opas explicitly.  Computations of opas focused on the case when $f$ is a polynomial are considered in \cite{BMS}.
%We will return to a discussion of speed of computation of the coefficients of $q_n$ both in connection with digital filters for $f \in H^2$ , and in the case when $f$ is a polynomial, for $H^2_\omega$, in Section \ref{design}. 
From the point of view of classical analysis, it is easy to see that one way to calculate $q_n f$ is to consider the set of vectors $\{f, zf, z^2f, \ldots, z^nf\},$ use the Gram-Schmidt process to find an orthonormal basis of the finite dimensional space $f \cdot\mathcal{P}_n,$ and express $q_nf$ in that basis. This gives rise to orthonormal polynomials and is the content of the following theorem, discussed in \cite{BKLSS}.   

\begin{thm}[\cite{BKLSS}] 
Let $f \in H^2_\omega\setminus\{0\}$,  $n \in \N,$ and let $q_n$ be the $n$-th opa of $1/f.$ For each $ k = 0, 1, \ldots n,$ let $\varphi_k$ be a polynomial of degree $k$ such that $\{\varphi_k f\}_{k=0}^n$ is an orthonormal basis of $f \cdot\mathcal{P}_n.$ Then 
\begin{equation}\label{FSofqn}
  q_n(z) = \overline{f(0)} \sum_{k=0}^{n} \overline{\varphi_k(0)}  \varphi_k(z). 
\end{equation}
\end{thm}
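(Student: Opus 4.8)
The plan is to lean entirely on the fact, already recorded in the excerpt, that $q_n f$ is the orthogonal projection of the constant function $1$ onto the finite-dimensional subspace $f\cdot\mathcal{P}_n\subset H^2_\omega$. Since we are handed an orthonormal basis $\{\varphi_k f\}_{k=0}^n$ of that subspace, the projection has the standard Fourier expansion
\[
q_n f \;=\; \sum_{k=0}^n \langle 1,\varphi_k f\rangle_\omega\,\varphi_k f ,
\]
so the entire content of the theorem is the explicit evaluation of the coefficients $\langle 1,\varphi_k f\rangle_\omega$.

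Next I would compute those coefficients. The key elementary observation is that for any $g(z)=\sum_{j\ge 0} c_j z^j\in H^2_\omega$ one has $\langle g,1\rangle_\omega = c_0\,\omega_0 = g(0)$, using $\omega_0=1$. Applying this with $g=\varphi_k f$ gives $\langle \varphi_k f,1\rangle_\omega = (\varphi_k f)(0) = \varphi_k(0)f(0)$, and therefore $\langle 1,\varphi_k f\rangle_\omega = \overline{\varphi_k(0)}\,\overline{f(0)}$. Substituting back yields
\[
q_n f \;=\; \overline{f(0)}\sum_{k=0}^n \overline{\varphi_k(0)}\,\varphi_k f .
\]

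Finally I would remove the common factor $f$. Both $q_n$ and $\overline{f(0)}\sum_{k=0}^n \overline{\varphi_k(0)}\varphi_k$ are polynomials, and the previous display says they coincide after multiplication by the analytic function $f$; since $f\not\equiv 0$ on $\D$, its zero set has no accumulation point in $\D$, so the two polynomials agree on a set with an accumulation point and hence are identically equal. This is precisely \eqref{FSofqn}.

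I do not expect a real obstacle here; the only steps needing a moment of care are the last cancellation — one should not "divide by $f$" inside $H^2_\omega$, which is not closed under pointwise quotients, but rather note the identity is between polynomials and invoke the identity theorem — and the bookkeeping of complex conjugates, since $f(0)$ need not be real: the right-hand side of \eqref{FSofqn} is linear in $\varphi_k(z)$ and conjugate-linear in both $\varphi_k(0)$ and $f(0)$, matching the conjugate-linearity of $\langle\cdot,\cdot\rangle_\omega$ in its second argument.
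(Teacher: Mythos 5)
Your proposal is correct and follows essentially the same route as the paper: expand the projection $q_nf$ of $1$ in the orthonormal basis $\{\varphi_k f\}_{k=0}^n$, evaluate $\langle 1,\varphi_k f\rangle_\omega=\overline{f(0)\varphi_k(0)}$ using $\omega_0=1$, and cancel the factor $f$. Your explicit justification of that last cancellation via the identity theorem is a small extra care the paper leaves implicit, but the argument is the same.
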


\begin{proof}
    Since $q_nf \in f \cdot\mathcal{P}_n$ and $\{\varphi_k f\}_{k=0}^n$ is an orthonormal basis of $f \cdot\mathcal{P}_n,$ we can express $q_nf$ in that basis:
    \begin{equation*}
        q_n(z)f(z) = \sum_{k=0}^n \langle q_nf,\varphi_kf\rangle_{\omega} \varphi_k(z) f(z).
    \end{equation*}
    But $q_nf$ is the projection of $1$ onto $f \cdot\mathcal{P}_n,$ and therefore $\langle q_nf,\varphi_kf\rangle_{\omega} = \langle 1,\varphi_kf\rangle_{\omega}.$ By the defintion of the inner product in $H^2_\omega$, this gives
    \begin{equation*}
        q_n(z) f(z) = \sum_{k=0}^{n}  \overline{f(0)\varphi_k(0)}  \varphi_k(z) f(z),  
    \end{equation*} and therefore, \eqref{FSofqn} follows.
\end{proof}

The polynomials $\varphi_k$ can be thought of as orthonormal polynomials in the Hilbert space weighted by $f$, in the sense that 
$\langle \varphi_kf,\varphi_jf \rangle_\omega=\delta_{kj}.$ In particular, if we restrict to the classical space $H^2$, this is the same as saying that the polynomials $\varphi_k$ are orthonormal polynomials in the weighted space $H^2(\mu)$ where the measure $d\mu = \frac{1}{2\pi}|f|^2 d\theta.$  This approach was discussed in \cite{CC} for the Hardy space and in \cite{BKLSS} for Dirichlet-type spaces. 

Notice also that since $\{\varphi_k f\}_{k=0}^n$ is an orthonormal basis of $f \cdot\mathcal{P}_n$,  $\sum_{k=0}^{n}  \overline{f(0)\varphi_k(0)}  \varphi_k(z) f(z) =: K_n(z,0)$ is the reproducing kernel for $f \cdot\mathcal{P}_n$ at $0:$ this means that for any polynomial $q \in \mathcal{P}_n$, $\langle qf, K_n(\cdot,0)\rangle_\omega = q(0)f(0).$  This can also be seen directly, since 
$$\langle qf, q_n f\rangle_\omega = \langle qf, 1\rangle_\omega = q(0)f(0).$$
Thus $q_n(z) f(z) = K_n(z,0).$

In the particular case of $H^2,$ the $n$-th opa $q_n$ is even more tightly connected to the $n$-th orthonormal polynomial, as is well-known (see, e.g., \cite{Ge,BaS}). The authors of \cite{BKLSS} used this connection to express the opas in terms of weighted orthonormal polynomials.  
\begin{thm}[\cite{BKLSS}]\label{H2OP}
Let $f \in H^2\setminus\{0\},$  $n \in \N,$ and let $q_n$ be the $n$-th opa of $1/f.$ For each $ k = 0, 1, \ldots n,$ let $\varphi_k$ be a polynomial of degree $k$ such that $\{\varphi_k f\}_{k=0}^n$ is an orthonormal basis of $f \cdot\mathcal{P}_n.$ Let $\varphi^*_n(z) = z^n \overline{\varphi_n(1/\bar{z})}.$ Then $q_n(z)=\overline{f(0)}\hat{\varphi}_n(n)\varphi_n^*(z).$
\end{thm}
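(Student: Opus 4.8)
The plan is to combine the formula for $q_n$ from the preceding theorem with the Christoffel--Darboux identity for orthonormal polynomials on the unit circle. By that theorem, $q_n(z)=\overline{f(0)}\sum_{k=0}^{n}\overline{\varphi_k(0)}\,\varphi_k(z)$, and, as already observed, when the ambient space is $H^2$ the polynomials $\varphi_k$ are exactly the orthonormal polynomials for the measure $d\mu=\frac{1}{2\pi}|f|^2\,d\theta$ on $\T$. Since $f\in H^2\setminus\{0\}$ is analytic on $\D$, its boundary values are nonzero a.e.\ on $\T$, so $\mu$ is a finite positive measure of infinite support and the standard theory of orthonormal polynomials on the circle applies. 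In particular, the sum above is $K_n^{\mu}(z,0)$, the value at the origin of the Christoffel--Darboux (reproducing) kernel of $\mathcal{P}_n$ in $L^2(\mu)$.

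With this in hand I would invoke the Christoffel--Darboux formula in the form
\[
K_n^{\mu}(z,w)=\frac{\overline{\varphi_n^*(w)}\,\varphi_n^*(z)-\bar w z\,\overline{\varphi_n(w)}\,\varphi_n(z)}{1-\bar w z},
\]
and specialize it to $w=0$: the second term in the numerator disappears and the denominator becomes $1$, leaving $K_n^{\mu}(z,0)=\overline{\varphi_n^*(0)}\,\varphi_n^*(z)$. Then I would read off $\varphi_n^*(0)$ from the definition: if $\varphi_n(z)=\sum_{j=0}^{n}c_j z^j$ with $c_n=\hat{\varphi}_n(n)$, then $\varphi_n^*(z)=z^n\overline{\varphi_n(1/\bar z)}=\sum_{j=0}^{n}\overline{c_{n-j}}\,z^j$, so $\varphi_n^*(0)=\overline{c_n}=\overline{\hat{\varphi}_n(n)}$ and hence $\overline{\varphi_n^*(0)}=\hat{\varphi}_n(n)$. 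Substituting this into the expression for $q_n$ yields $q_n(z)=\overline{f(0)}\,\hat{\varphi}_n(n)\,\varphi_n^*(z)$.

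A more self-contained route is to skip Christoffel--Darboux altogether and verify the identity directly from the characterization of $q_n$ as the unique polynomial of degree at most $n$ with $\langle q_n f-1,\,z^j f\rangle=0$ for $j=0,\dots,n$ (uniqueness having been noted just after the definition of the opa). Setting $r(z):=\overline{f(0)}\,\hat{\varphi}_n(n)\,\varphi_n^*(z)$ and using $\langle gf,hf\rangle_{H^2}=\langle g,h\rangle_{L^2(\mu)}$ for polynomials $g,h$ together with the boundary identity $\varphi_n^*(e^{i\theta})=e^{in\theta}\overline{\varphi_n(e^{i\theta})}$, one rewrites $\langle r f,\,z^j f\rangle$ as $\overline{f(0)}\,\hat{\varphi}_n(n)\,\overline{\langle \varphi_n,\,z^{n-j}\rangle_{L^2(\mu)}}$. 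This vanishes for $1\le j\le n$ because $\varphi_n\perp\mathcal{P}_{n-1}$, and for $j=0$ it equals $\overline{f(0)}$ by the standard relation $\langle \varphi_n,z^n\rangle_{L^2(\mu)}=1/\overline{\hat{\varphi}_n(n)}$ between an orthonormal polynomial and its leading coefficient; since $\langle 1,z^j f\rangle=\delta_{j0}\,\overline{f(0)}$, uniqueness forces $r=q_n$.

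I do not expect a serious obstacle: once the correspondence between the opa in $H^2$ and the orthonormal polynomials for $|f|^2\,d\theta$ is in place, either route is short. The only thing requiring care is bookkeeping --- choosing the right variant of the Christoffel--Darboux formula (equivalently, invoking the leading-coefficient relation with the correct conjugation), keeping track of the conjugates introduced by the $*$-operation, and remembering that each $\varphi_k$ is pinned down only up to a unimodular factor; the last is harmless since both sides of the claimed identity are invariant under $\varphi_k\mapsto u_k\varphi_k$ with $|u_k|=1$.
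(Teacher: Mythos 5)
Your proposal is correct, and it actually contains two valid arguments. Your second, ``self-contained'' route is essentially the paper's own proof run in reverse: the paper starts from the normal equations of Theorem \ref{LinearSys}, rewrites them as $\langle q_n^*, z^{n-j}\rangle_\mu=\delta_{j,0}f(0)$, expands $q_n^*$ in the $L^2(\mu)$-orthonormal basis $\{\varphi_k\}_{k=0}^n$ of $\mathcal{P}_n$ so that only the top term survives, and identifies the coefficient $\langle q_n^*,\varphi_n\rangle_\mu=f(0)\overline{\hat\varphi_n(n)}$ before un-starring; you instead posit the candidate $r=\overline{f(0)}\hat\varphi_n(n)\varphi_n^*$ and check that it satisfies the same orthogonality conditions, invoking uniqueness. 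The key facts used are identical ($\varphi_n\perp\mathcal{P}_{n-1}$ in $L^2(\mu)$ and the leading-coefficient relation $\langle\varphi_n,z^n\rangle_\mu=1/\overline{\hat\varphi_n(n)}$), so this is the same proof with the verification and derivation swapped. Your first route, via the mixed Christoffel--Darboux identity evaluated at $w=0$, is genuinely different from what the paper does: it takes the kernel formula $q_n(z)=\overline{f(0)}\sum_{k=0}^n\overline{\varphi_k(0)}\varphi_k(z)$ of the preceding theorem as the starting point and imports a standard OPUC identity, which makes the statement an immediate corollary and explains conceptually why only $\varphi_n^*$ appears; the cost is reliance on an external identity (whose same-index form you quote correctly, and whose invariance under unimodular rescaling of the $\varphi_k$ you rightly note), whereas the paper's computation is self-contained from the normal equations. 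Your attention to the a.e.\ nonvanishing of $f$ on $\T$, to $\overline{\varphi_n^*(0)}=\hat\varphi_n(n)$, and to the phase ambiguity of the $\varphi_k$ covers the only points where signs or conjugates could go astray.
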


\begin{proof}
Let $q_n(z)=\sum_{k=0}^na_kz^k$ and consider the weighted space $H^2(\mu)$, where $d\mu=\frac{1}{2\pi}|f|^2d\theta$. In light of Theorem $\ref{LinearSys}$, we see that 
\begin{align*}
    \delta_{j,0}f(0)&=\sum_{k=0}^n\langle z^jf,z^kf\rangle \overline{a_k}\\
    &=\langle z^jf,\sum_{k=0}^na_kz^kf\rangle\\
    &=\langle z^j,q_n\rangle_\mu\\
    &=\langle z^n\overline{q_n},z^{n-j}\rangle_\mu\\
    &=\langle q_n^*,z^{n-j}\rangle_\mu.
\end{align*}
Now since $\{\varphi_k\}_{k=0}^n$ is an orthonormal basis of $H^2(\mu)$, it follows that
\begin{align*}
    q_n^*(z)&=\sum_{k=0}^n\langle q_n^*,\varphi_k\rangle_\mu\varphi_k(z)\\
    &=\langle q_n^*,\varphi_n\rangle_\mu\varphi_n(z)\\
    &=f(0)\overline{\hat{\varphi}_n(n)}\varphi_n(z).
\end{align*}
Therefore,
\begin{align*}
    q_n(z)&=z^n\overline{q_n^*(1/\overline{z})}\\
    &=\overline{f(0)}\hat{\varphi}_n(n)z^n\overline{\varphi_n(1/\overline{z})}\\
    &=\overline{f(0)}\hat{\varphi}_n(n)\varphi_n^*(z).
\end{align*}
\end{proof}
It is known from the theory of orthogonal polynomials that the orthonormal polynomials $\varphi_n$ have all their zeros inside the open unit disk $\D,$ and therefore the opas $q_n$ have no zeros in the closed disk. In particular, this connection was important for designing what are called stable filters, which we discuss in the next section.

%%%%%%%%%%%%%%%%%%%%%%%%%%%%%%%%%%%%%%%%%%%%%%%%%%%%%%%%%%%%%

\section{Digital Filter Design}\label{design}

\subsection{Introduction to filters}\label{intro}

Several problems in engineering ultimately depend on a system's response to an input.  In the case of a digital system, the input is given as a sampling sequence $\{x(n)\}_{n=-\infty}^{\infty}$, and the output $\{y(n)\}_{n=-\infty}^{\infty}$ can often be described by a difference equation
\begin{equation}
 y(n)=\sum_{k=0}^M b_k x(n-k)-\sum_{j=1}^N a_j y(n-j), \label{diff}   
\end{equation}
where the coefficients $a_j$ and $b_k$ are real numbers.  
  
If the coefficients remain constant over time, the system is known as \textit{linear time-invariant}, or LTI.  If the $a_j$'s are not all zero, then the system is referred to as \textit{recursive}.  This means that one or more of the system's output is used as an input. Now, if we consider an input sequence $\{x(n)\}_{n=-\infty}^{\infty}$ that is bounded, it seems problematic in practice for $|y(n)|$ to increase without bound as $n\rightarrow\infty$.  Therefore, it is of interest to seek for properties of a system that preserve boundedness.  A system in which a bounded input yields a bounded output is called \textit{ BIBO stable}.  

In order to facilitate our discussion of filters, we will assume that our input sequences $\{x(n)\}_{n=-\infty}^{\infty}$ have the property that $x(n)=0$ for $n<0$.  A sequence with this property is known as \textit{causal}.  Moreover, we will assume that the input sequences are \textit{exponentially bounded}.  That is, we assume that 
\[
|x(n)|\leq K^{n},\enskip n\geq n_0
\]
for some constant $K$ and some integer $n_0$.  Now, to better understand the relationship between the input and output sequences, we make use of the following operator.  For any causal sequence $\{a(n)\}_{n=-\infty}^{\infty}$ that's exponentially bounded, consider the mapping $$\{a(n)\}_{n=-\infty}^{\infty}\mapsto\sum_{n=0}^{\infty}a(n)z^{-n}.$$  
This mapping is known as the $\textit{z-transform}$.  It is a linear operator from the space of exponentially bounded causal sequences onto the space of functions analytic at $\infty$. The $z$-transform of a sequence $\{a(n)\}_{n=-\infty}^{\infty}$ has a region of convergence (ROC) given by $z\in\hat{\C}$ such that 
\begin{equation}
\limsup\sqrt[n]{|a(n)|}<|z|. \label{ROC}   
\end{equation}
With this, the sum and product of two transformed sequences are defined to be in the intersection of both ROCs, and the product is given by the expression
\[
\Bigg(\sum_{n=0}^{\infty}a(n)z^{-n}\Bigg)\Bigg(\sum_{n=0}^{\infty}b(n)z^{-n}\Bigg)=\sum_{n=0}^{\infty}\Bigg(\sum_{k=0}^n a(k)b(n-k)\Bigg)z^{-n}.
\]

The $z$-transform has many properties that make it useful in the analysis of digital systems.  In particular, if $A(z)$ is the $z$-transform of the sequence $\{a(n)\}_{n=-\infty}^{\infty}$, then the $z$-transform of the sequence $\{c(n)=a(n-N): -\infty<n<\infty\}$ is $z^{-N}A(z)$ for any $N\in\N$.  
Therefore, by applying the $z$-transform to both sides of (\ref{diff}), we see that the $z$-transforms of the input and output are related by the equation
\begin{equation*}
    Y(z)=H(z)X(z),
\end{equation*}
where $H(z)$ is given by the rational function
\begin{equation}
H(z)=\frac{\sum_{k=0}^M b_k z^{-k}}{1+\sum_{j=1}^N a_j z^{-j}}.\label{trans}
\end{equation}
We use $X(z)$ and $Y(z)$ to represent the the $z$-transforms of the input and output sequences, respectively. The rational function $H$ is known as the transfer function, or \textit{filter}, of the system.  In the case of a recursive system, the transfer function is commonly called an \textit{infinite impulse response filter}, or IIR filter.

For the purpose of our discussion, we will only be considering recursive LTI systems.  For simplicity, we refer to a BIBO stable system as \textit{stable}.  Likewise, we refer to the filter of a BIBO stable system as stable.  The goal of Section 3 is to demonstrate how optimal polynomial approximants are used in designing a stable filter.  It's worth noting that although we will be considering filters of a single variable, several applications are concerned with filters of multiple variables. As mentioned in \cite{G}, the processing of medical pictures, satellite photographs, radar and sonar maps, seismic data mappings, gravity waves data, and magnetic recordings are examples in which 2D signal processing is needed. Here, we are concerned with designing filters $H(z,w)$ in $\C^2$ of analogous stability.

\subsection{Stability}
An important part of designing a digital filter is ensuring that the filter is stable.  The stability of the filter will prevent the magnitude of the output from increasing without bound, which could be damaging to the physical system.  Therefore, we seek for properties of the expression in (\ref{trans}) which guarantee stability.  If we define the polynomials $A(z)=1+\sum_{j=1}^N a_jz^j$ and $B(z)=\sum_{k=0}^Mb_k z^k$, we see that 
\begin{equation}
H(z)=\frac{z^{N-M}B^*(z)}{A^*(z)}, \label{conj}
\end{equation}
where $A^*(z)$ and $B^*(z)$ are the reverse polynomials of $A(z)$ and $B(z)$, respectively. From (\ref{conj}), it is easy to check that $H(z)$ is a rational function analytic at $\infty$. 
%we see that the degree of the numerator is at most $N$ whereas the degree of the denominator is exactly $N$.  In other words, the rational function $H(z)$ is analytic at $\infty$.
Therefore, $H(z)$ is the $z$-transform of some causal sequence $\{h(n)\}_{n=-\infty}^{\infty}$, and we write
\begin{equation}
    H(z)=\sum_{n=0}^{\infty}h(n)z^{-n}\label{series}
\end{equation}
for some specified ROC.

Now if we assume that the poles of $H(z)$ are contained in the disk $\D$, the expression in (\ref{series}) must be vaild on $\mathbb{T}$.  Since the series defined by $\sum_{n=0}^{\infty}|h(n)|z^{-n}$ has the same ROC, it follows that 
\begin{equation}
\sum_{n=0}^{\infty}|h(n)|<\infty.\label{sum}
\end{equation}
This observation leads us to the following theorem.

\begin{thm}
A filter $H(z)$ is stable if its poles are contained in the disk $\D$.
\end{thm}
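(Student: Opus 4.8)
The plan is to deduce \textbf{BIBO} stability from the absolute summability of the impulse response, which \eqref{sum} already supplies under the hypothesis that the poles of $H$ lie in $\D$.

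First I would make the time-domain content of the transfer function explicit. Writing $H(z)=\sum_{n=0}^{\infty}h(n)z^{-n}$ as in \eqref{series}, the sequence $\{h(n)\}$ is the impulse response of the system, i.e.\ the output produced by \eqref{diff} when the input is the unit spike $(1,0,0,\ldots)$. Using linearity and time-invariance of the system, together with causality of both $\{h(n)\}$ and the input $\{x(n)\}$ --- equivalently, reading off the coefficients of the product $Y(z)=H(z)X(z)$ via the convolution formula for $z$-transforms stated above --- the output is
\[
y(n)=\sum_{k=0}^{n}h(k)\,x(n-k),\qquad n\ge 0,
\]
and $y(n)=0$ for $n<0$.

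Next, given an arbitrary bounded input, say $|x(n)|\le M$ for all $n$, I would estimate directly:
\[
|y(n)|\;\le\;\sum_{k=0}^{n}|h(k)|\,|x(n-k)|\;\le\;M\sum_{k=0}^{\infty}|h(k)|,
\]
and by \eqref{sum} the right-hand side is finite and independent of $n$. Hence a bounded input yields a bounded output, which is precisely the assertion that $H$ is stable.

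The place where analyticity genuinely enters is the earlier step \eqref{sum}: it is exactly because the poles of $H$ lie in $\D$ that the expansion \eqref{series} converges on an annulus containing $\mathbb{T}$, which forces $\limsup\sqrt[n]{|h(n)|}<1$ and hence $\sum|h(n)|<\infty$. Since the excerpt has already carried this out, what remains is only the elementary convolution estimate above. The sole point needing care is the (routine) observation that the series defining $y(n)$ converges absolutely --- immediate from $\sum_{k}|h(k)|<\infty$ and boundedness of $\{x(n)\}$ --- so there is no substantial obstacle beyond this bookkeeping.
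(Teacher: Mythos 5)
Your proposal is correct and follows essentially the same route as the paper: both arguments invoke the absolute summability \eqref{sum} of the impulse response (already derived from the hypothesis on the poles before the theorem) and then bound the convolution $y(n)=\sum_{k=0}^{n}h(k)x(n-k)$ by $M\sum_{k=0}^{\infty}|h(k)|$. Your additional remarks on where analyticity enters and on absolute convergence of the convolution sum are just a more explicit account of the same argument.
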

\begin{proof}
Let $\{x(n)\}_{n=-\infty}^{\infty}$ be an input sequence with $|x(n)|\leq M$ for all $n$.  If the poles of $H(z)$ are contained in $\D$, then (\ref{sum}) holds.  Consequently, the output sequence $\{y(n)\}_{n=-\infty}^{\infty}$ is bounded with
\begin{align*}
    |y(n)|&=\Bigg|\sum_{k=0}^n h(k)x(n-k)\Bigg|\\
    &\leq M\sum_{n=0}^{\infty}|h(n)|
\end{align*}
for all $n$.  By definition, $H(z)$ must be stable.
\end{proof}
This theorem gives a sufficient condition for a filter to be stable.  However, it's important to note that a filter with a pole outside of $\D$ need not be stable.  As an example, consider the function $$H(z)=\frac{1}{z-1}.$$  For all $|z|>1$, this function is represented by the series $$H(z)=\sum_{n=1}^{\infty}z^{-n}.$$  If we consider the input defined by $x(n)=1$ for $n\geq 0$, the magnitude of the output is given by 
\begin{equation*}
    |y(n)|=\Bigg|\sum_{k=0}^nh(k)x(n-k)\Bigg|=n,
\end{equation*}
which clearly increases without bound as $n\rightarrow\infty$.

\subsection{Frequency Response}
Many problems are concerned with how a system responds to a sinusoidal input.  This is particularly evident in audio equalizing, where the input function represents a superposition of multiple sound waves.  Under the assumption that the system is linear, it is therefore advantageous to study the response of a system to the input $x(n)=e^{ins}$, where $s\in\R$ is a particular frequency.  In this case, the output is known as the \textit{frequency response} of the system.  If we let $H(z)$ be the corresponding filter, then the frequency response is expressed as 
\begin{align*}
    y(n)&=\sum_{k=0}^nh(k)e^{i(n-k)s}\\
    &=\bigg[\sum_{k=0}^nh(k)e^{-iks}\bigg]e^{ins}\\
    &=H(e^{is})e^{ins}.
\end{align*}
We then see that the frequency response is bounded, with \begin{equation*}
|y(n)|=|H(e^{is})|,\enskip n\geq 0.  
\end{equation*}
The quantity $|H(e^{is})|$ is referred to as the \textit{magnitude of the frequency response}. To get an idea of what this function looks like, consider the filter
\[
H(z)=\frac{0.3(z^2+2z+1)}{1.3z^2+1}.
\]
The poles and zeros of $H(z)$ are displayed in the following diagram:
\begin{center}
     \includegraphics[width=10cm]{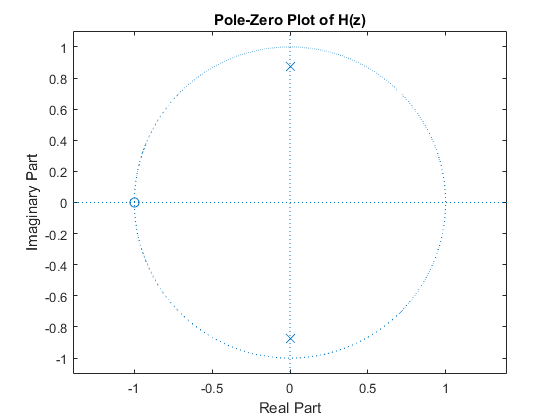}
\end{center}
The poles of $H(z)$ are marked with a cross and the zero of $H(z)$ is marked with a circle.  On the interval $[0,\pi]$, we therefore expect $|H(e^{is})|$ to have a maximum around 1.5 radians and a minimum around 3.1 radians.  This can be seen in the following graph: 
\begin{center}
     \includegraphics[width=10cm]{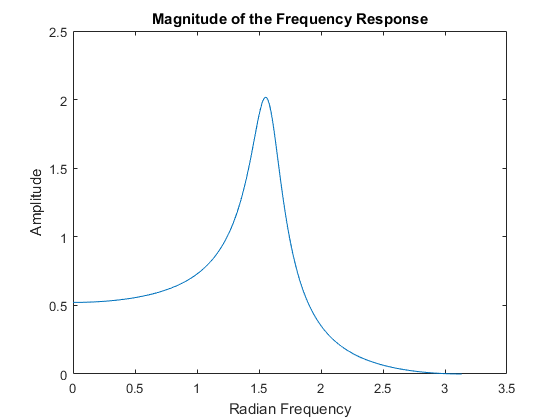}
\end{center}

Often in the design of a digital filter, the goal is to develop a rational function $H(z)$ in which the modulus satisfies a set of specifications on the boundary $\mathbb{T}$.  The effect of this would control the response of the system to the input $x(n)=e^{ins}$.  For the purpose of our discussion, we will assume that the specifications are given in the form of a non-negative even step function on $[-\pi,\pi]$.  Such a step function is known as an \textit{ideal digital filter}. 
The problem of digital filter design can then be stated as follows:

\begin{prob}\label{problem}
For a given ideal filter $\chi(e^{is})$, find a rational function $H(z)$ with poles inside of $\D$ such that $|H(e^{is})|$ is an approximation of $\chi(e^{is})$.
\end{prob}
Methods of approximation which guarantee stability of the filter is an interesting topic of research.  We present a method which has been modified from the ideas in \cite{CC}.  This method creates a rational function $p(z)/q(z)$, with poles in $\D$, such that $|p(e^{is})|/|q(e^{is})|$ approximates $\chi(e^{is})$ in the \textit{least-squares sense}.  For any $f\in L^2$ and any $\eta>0$, we say that the quotient $g/h$ of two functions in $L^2$ approximates $f$ in the least-squares sense if $$\|hf-g\|_{L^2}<\eta.$$  In this case, we call $g/h$ an \textit{(LS)-approximant} of $f$ and write $f\approx_{LS}g/h$.

We will present the method of approximation in three stages.  In the first stage, we will approximate the ideal filter by the magnitude of a non-vanishing function in $H^2$.  In the second stage, we will use optimal polynomial approximants to approximate this non-vanishing function with the magnitude of a rational function.  In the third stage, we will alter the numerator and denominator of the rational function in order to ensure stability. 

%The presentation of the approximation method in the sections below has been modified but essentially follows the ideas in \cite{CC}. 

%This method of development was adapted from the ideas presented in %\cite{CC}.

\subsection{First stage of approximation}

We start the first stage by defining a continuous function $\chi_{\varepsilon}(e^{is})$ in the following way.  Let $\mathcal{S}=\{s_j\}_{j=1}^N$ denote the points of discontinuity of $\chi(e^{is})$.  For each $s_j\in\mathcal{S}$, let $I_j=(s_j-\varepsilon/2,s_j+\varepsilon/2)$.  Here, $\varepsilon$ is a positive number chosen so that the intervals do not overlap and such that $\varepsilon$ is smaller than the minimum of the step values.  If $s\notin\cup_{j=1}^N I_j$, set
\[
\chi_{\varepsilon}(e^{is}):=
\begin{cases}
\chi(e^{is}) &\text{if $\chi(e^{is})>0$}\\
\varepsilon &\text{if $\chi(e^{is})=0$}.
\end{cases}
\]
This creates a positive step function on $[-\pi,\pi]\setminus\cup_{j=1}^N I_j$.  Then connect each successive step with a straight line segment.  For each $s\in\cup_{j=1}^N I_j$, set $\chi_{\varepsilon}(e^{is})$ to coincide with these segments.  This creates a non-vanishing continuous function on $[-\pi,\pi]$.

We then create an analytic function on $\D$ by using $\chi_{\varepsilon}(e^{is})$.  For any $z\in\D$, define the function
\begin{equation*}
    f_{\varepsilon}(z):=\exp\left(\frac{1}{2\pi}\int_{-\pi}^{\pi}\frac{e^{is}+z}{e^{is}-z}\log\chi_{\varepsilon}(e^{is})ds\right).
\end{equation*}
Note that $f_{\varepsilon}(z)$ is analytic in $\D$, non-vanishing in $\overline{\D}$, and has the property that 
\[
\log |f_{\varepsilon}(z)|=\frac{1}{2\pi}\int_{-\pi}^{\pi}\operatorname{Re}\left(\frac{e^{is}+z}{e^{is}-z}\right)\log\chi_{\varepsilon}(e^{is})ds.
\]
i.e., $\log |f_{\varepsilon}(z)|$ solves the Dirichlet problem in $\D$ with boundary values defined by $\log\chi_{\varepsilon}(e^{is})$.  Therefore, the analytic function satisfies 
\begin{equation}
|f_{\varepsilon}(e^{is})|=\chi_{\varepsilon}(e^{is})\label{bound}
\end{equation}
for all $s\in[-\pi,\pi]$.  This leads us to the following theorem.
\begin{thm}
Any ideal filter $\chi$ can be approximated (in the least-squares sense) by a non-vanishing function $f\in H^2$.
\end{thm}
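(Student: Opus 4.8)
The plan is to take $f = f_\varepsilon$ with $\varepsilon$ chosen sufficiently small, so that the construction preceding the statement already does essentially all of the work. First recall that $\chi_\varepsilon$ is continuous on $[-\pi,\pi]$ and, by the choice of $\varepsilon$, bounded below by a positive constant; hence $\log\chi_\varepsilon$ is a bounded continuous function, and the Herglotz-type integral defining $f_\varepsilon$ shows that $|f_\varepsilon(z)| \le \max_s \chi_\varepsilon(e^{is}) < \infty$ for every $z \in \D$. Thus $f_\varepsilon \in H^\infty \subseteq H^2$, and we already know that $f_\varepsilon$ is non-vanishing in $\overline{\D}$ and that $|f_\varepsilon(e^{is})| = \chi_\varepsilon(e^{is})$ by \eqref{bound}. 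Consequently $\big\||f_\varepsilon| - \chi\big\|_{L^2(\T)} = \|\chi_\varepsilon - \chi\|_{L^2(\T)}$, and the theorem reduces to showing that this last quantity can be made smaller than any prescribed $\eta > 0$ by shrinking $\varepsilon$.

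To estimate $\|\chi_\varepsilon - \chi\|_{L^2(\T)}$, I would partition $[-\pi,\pi]$ into three pieces according to the definition of $\chi_\varepsilon$: the union $E := \bigcup_{j=1}^N I_j$ of the transition intervals, which are pairwise disjoint of total length $N\varepsilon$; the set $Z$ on which $\chi$ vanishes but $s \notin E$, where $\chi_\varepsilon \equiv \varepsilon$ and $|Z| \le 2\pi$; and the remaining set, where $\chi_\varepsilon = \chi$. The integrand $|\chi - \chi_\varepsilon|^2$ vanishes on the last set and equals $\varepsilon^2$ on $Z$. On each $I_j$ the value $\chi_\varepsilon(e^{is})$ lies on a line segment joining two step values, each of which is at most $M := \sup_s \chi(e^{is})$ by the choice of $\varepsilon$, so $0 \le \chi_\varepsilon \le M$ and hence $|\chi - \chi_\varepsilon| \le M$ there. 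Combining these,
\[
\|\chi_\varepsilon - \chi\|_{L^2(\T)}^2 \;\le\; \varepsilon^2\,|Z| + M^2\,|E| \;\le\; 2\pi\,\varepsilon^2 + M^2 N\,\varepsilon,
\]
which tends to $0$ as $\varepsilon \to 0^+$.

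Given $\eta > 0$, it then suffices to choose $\varepsilon$ small enough that the intervals $I_j$ are pairwise disjoint (as in the construction) and that $2\pi\,\varepsilon^2 + M^2 N\,\varepsilon < \eta^2$, and to set $f := f_\varepsilon$; this $f$ is non-vanishing in $\D$, belongs to $H^2$, and satisfies $\big\||f| - \chi\big\|_{L^2(\T)} < \eta$, which is the desired approximation. I do not anticipate a substantial obstacle here: the argument is a bookkeeping of where $\chi$ and $\chi_\varepsilon$ disagree, together with crude bounds on those sets. The two points that need a little care are the uniform bound $\chi_\varepsilon \le M$ on the transition intervals — which is precisely where one uses that $\varepsilon$ was taken smaller than the minimum of the positive step values — and the verification that $f_\varepsilon \in H^2$, i.e.\ that $\log\chi_\varepsilon$ is bounded so that $\log|f_\varepsilon|$ stays finite in $\D$.
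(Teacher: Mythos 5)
Your argument is correct and follows essentially the same route as the paper: both reduce to estimating $\|\chi-\chi_\varepsilon\|_{L^2}$ via the identity $|f_\varepsilon|=\chi_\varepsilon$ on $\T$, split the integral over the transition intervals $I_j$ (bounded by $\|\chi\|_\infty^2\,N\varepsilon$) and the set where $\chi_\varepsilon\equiv\varepsilon$ (bounded by $\varepsilon^2$ times its measure), and then take $\varepsilon$ small. Your added remarks that $\chi_\varepsilon\le\|\chi\|_\infty$ on the $I_j$ and that $f_\varepsilon\in H^\infty\subseteq H^2$ are correct and make explicit two points the paper leaves implicit.
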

\begin{proof}
Let $\chi(e^{is})$ be an ideal filter with a collection of discontinuities $\mathcal{S}=\{s_j\}_{j=1}^N$.  Given any $\eta>0$, choose $\varepsilon$ to satisfy 
\[
0<\varepsilon<\min\bigg\{\frac{\eta^2\pi}{N(\|\chi\|_{\infty}^2+1)},\frac{\eta}{\sqrt{2}}\bigg\}.
\]
Let $\{E_k\}_k$ denote the collection of intervals for which $\chi_\varepsilon(e^{is})=\varepsilon$. From (\ref{bound}), it follows that
\begin{align*}
    \|\chi-|f_{\varepsilon}|\|^2_{L^2}&=\|\chi-\chi_{\varepsilon}\|^2_{L^2}\\
    &=\sum_{j=1}^N\frac{1}{2\pi}\int_{I_j}|\chi(e^{is})-\chi_{\varepsilon}(e^{is})|^2ds\\ &+\sum_{k}\frac{1}{2\pi}\int_{E_k}|\chi(e^{is})-\chi_\varepsilon(e^{is})|^2ds\\
    &\leq \varepsilon\frac{N}{2\pi}\|\chi\|_{\infty}^2 +\varepsilon^2\sum_k\frac{1}{2\pi}\int_{E_k}ds\\
    &\leq\varepsilon\frac{N}{2\pi}\|\chi\|_{\infty}^2+\varepsilon^2\\
    &<\eta^2.
\end{align*}
\end{proof}

This theorem states that $|f_{\varepsilon}(z)|$ is an (LS)-approximant of $\chi(e^{is})$.  Now, since $f_{\varepsilon}(z)$ is a function in $H^2$ that doesn't vanish at the origin, the $n$-th optimal polynomial approximant $q_n$ of $1/f_{\varepsilon}$ is non-vanishing on $\overline{\D}$ (see, e.g., Theorem \ref{minimal_zero}).  This suggests that $q_n$ (more specifically, the reverse polynomial of $q_n$) should be a part of our rational function $H(z)$.  This observation leads to the second stage of the approximation.

\subsection{Second stage of approximation}\label{APTSD}

Given an ideal filter $\chi(e^{is})$, the first stage of the approximation involved determining the function $f_{\varepsilon}(z)$.  It then followed that $\chi(e^{is})\approx_{LS}|f_{\varepsilon}(e^{is})|$.  In the next stage, we approximate $f_{\varepsilon}$ with the magnitude of a rational function. 

%(LS)-approximant:
%we will say a function $f\in L^2$ is (LS)-approximable if 
%for any $\eta>0$, there exist polynomials $p$ and $q$ such that $\|qf-p\|_{L^2}<\eta$. In that case, we call $p/q$ an (LS)-approximant of $f$.

%In the second stage, we will find a least squares approximant ((LS)-approximant) of $f_{\varepsilon}(z).$ 
%with a rational function in the \textit{least-squares sense} (LS).

Since $f_{\varepsilon}$ is actually an outer function in $H^2$, it follows that $\|q_nf_{\varepsilon}-1\|_{L^2}\rightarrow 0$
as $n\rightarrow\infty$, where $q_n$ denotes the $n$-th opa of $1/f_{\varepsilon}$.  Therefore, given any $\eta>0$, we can choose $N$ so that 
$\|q_Nf_{\varepsilon}-1\|_{L^2}<\eta.$  Moreover for any $M\geq 0$, we see that
\[
\|q_Nf_{\varepsilon}-p_M\|_{L^2}=
\inf_{p\in\mathcal{P}_M}\|q_Nf_{\varepsilon}-p\|_{L^2}<\eta,
\]
where $p_M$ denotes the orthogonal projection of $q_Nf_{\varepsilon}$ onto $\mathcal{P}_M$.  Hence, we have that $p_M/q_N$ is an (LS)-approximant of $f_{\varepsilon}$.  Consequently, we have that $|f_{\varepsilon}(e^{is})|\approx_{LS}|p_M(e^{is})|/|q_N(e^{is})|$.

It's important to note that it's computationally efficient to determine the polynomials $p_M$ and $q_N$.  We have already seen in Theorem \ref{LinearSys} that the coefficients of $q_N$ can be expressed as the solution of a system of $N+1$ linear equations, each of which are dependent only on the function $f_{\varepsilon}$.  The entries of the associated matrix $B$ can be expressed as a Fourier coefficient of the $L^2$ function $|f_{\varepsilon}|^2$, i.e., 

\begin{align*} 
	  B_{jk}=\frac{1}{2\pi}\int_{-\pi}^{\pi}|f_{\varepsilon}(e^{is})|^2e^{i(k-j)s}ds. 
\end{align*} 
Hence, they can be computed efficiently through any available FFT algorithm. Furthermore, $B$ is a Gram matrix generated by the vectors $\{z^kf_{\varepsilon}\}_{k=0}^N$. Since these vectors are linearly independent, it follows that $B$ is invertible. Moreover, since Gram matrices are positive definite, and since $B$ is Hermitian and Toeplitz, we can use any of the fast algorithms to compute its inverse. We then see that the coefficients of $q_N$, say $a_0,\dots,a_N$, are given by the expression 
\[
\begin{bmatrix}
a_0\\
a_1\\
\vdots\\
a_N
\end{bmatrix}
=B^{-1}
\begin{bmatrix}
\overline{f_{\varepsilon}(0)}\\
0\\
\vdots\\
0
\end{bmatrix}.
\]
Hence, the coefficients are determined by the first column of $B^{-1}$ scaled by $\overline{f_{\varepsilon}(0)}$.  On the other hand, the coefficients of $p_M$ are given by the first $M+1$ Fourier coefficients of $q_Nf_{\varepsilon}$.

\subsection{Third stage of approximation}
In the first two stages of approximation, we were able to approximate an ideal filter $\chi(e^{is})$ with the magnitude of a rational function.  More specifically,
\[
\chi(e^{is})\approx_{LS}|f_{\varepsilon}(e^{is})|\approx_{LS}\frac{|p_M(e^{is})|}{|q_N(e^{is})|}.
\]
Since the ideal filter is assumed to be an even function on $[-\pi,\pi]$, we have that 
\begin{align*}
    \chi(e^{is})&=\chi(e^{-is})\\
    &\approx_{LS}|f_{\varepsilon}(e^{-is})|\\
    &\approx_{LS}\bigg|\frac{p_M(e^{-is})}{q_N(e^{-is})}\bigg|\\
    &=\bigg|\frac{p_M^*(e^{is})}{q_N^*(e^{is})}\bigg|,
\end{align*}
where $p_M^*$ and $q_N^*$ are the reverse polynomials of $p_M$ and $q_N$, respectively.  Now, since $M$ is arbitrary, choose $M\leq N$ and define the rational function
\begin{equation}
    H(z)=\frac{p^*_M(z)}{q^*_N(z)}. \label{filter}
\end{equation}
Then $H(z)$ is a rational function that's analytic at $\infty$.  It has the property that $|H(e^{is})|$ is an approximation of $\chi(e^{is})$.  Furthermore, since the zeros of $q_N$ are outside of $\overline{\D}$, and since $H(z)$ is analytic at $\infty$, it follows that the poles of $H(z)$ are contained in the disk $\D$.  Therefore, the expression in (\ref{filter}) gives us our stable filter.

This three stage method of approximation stems from the ideas presented in 1982 by Chui and Chan in \cite{CC}.  The use of opas in filter design doesn't seem to have gone much further in the one variable case, although some papers later in the 80s and 90s discuss related IIR filter designs (see, e.g., \cite{CCP,CPC,DC}).  It would be interesting to know whether opas might have some further applications in signal processing research.  There do remain some open problems in the several variable case, as discussed in Section 4.2.

%%%%%%%%%%%%%%%%%%%%%%%%%%%%%%%%%%%%%%%%%%%%%%%%%%%%%%%%%%%%%%%%%%%%%% 

\section{Zeros of opas}\label{zeros}

\subsection{One variable spaces}

Let us now turn to a discussion of zeros of opas in the spaces $H^2_\omega.$ According to Theorem \ref{H2OP}, given $f \in H^2$, the $n$-th opa $q_n$ is equal to a multiple of the $n$-th reverse orthonormal polynomial $\varphi_n^*$. Assuming $f(0)\neq 0,$ it is well-known (see, e.g., \cite{Ge,BaS}) that $\varphi_n$ has all its zeros inside the open unit disk $\D,$ and therefore $\varphi_n^*$, and hence $q_n$, whose zeros are reflected across the unit circle, have no zeros inside the closed unit disk. From an engineering perspective, for a given function $f$, the produced filter whose poles coincide with the zeros of $\varphi_n$ is a stable filter. The authors of \cite{blSimanek} studied this phenomenon for the more general spaces $H^2_\omega$.  They considered the following ``minimal-zero" problem: what is the infimum of the modulus of any zero of an optimal polynomial approximant $q_n$ for any $n$ and any $ f \in H^2_\omega$ (assuming $f(0) \neq 0$)? In other words, can any zero of any opa (for any function $f$) penetrate the disk for a given space $H^2_\omega$ and if so, how far inside the disk can that zero appear? The authors proved the following result (see Theorem 2.1 in \cite{blSimanek}).

\begin{thm}[\cite{blSimanek}]\label{minimal_zero}
Given $f\in H^2_\omega$ such that $f(0)\neq 0,$ for each $n \in \N,$ let $q_{n,f}$ denote the $n$-th opa of $1/f.$ Consider the extremal problem of identifying
\begin{equation*}
   M:= \inf \left\{ |z|: q_{n,f}(z)=0 \mbox{ for some }  f \in H^2_\omega \mbox{ and some } n \in \N \right\}.
\end{equation*}
Then if the sequence of weights $\{\omega_k\}_{k \in N}$ is non-decreasing, $M = 1$ and the infimum is not achieved.  On the other hand, if the sequence of weights is such that there exist $k,n \in \N$ that satisfy $\omega_{k+n+1}< \omega_{k+1}/4,$ then $M < 1$ and the infimum is a minimum.  
\end{thm}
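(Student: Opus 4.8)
The plan is to attack the two halves of the statement separately, and in each case to exploit the linear system $Ba=y$ from Theorem \ref{LinearSys}. The key structural observation is that since $\omega_0 = 1$, a degree-$n$ opa $q_{n,f}$ has constant term $a_0$ determined by the first row of $B^{-1}y$, and that a zero of $q_{n,f}$ can only be close to the origin if $a_0$ is small relative to the other coefficients; controlling $a_0$ in terms of the weights is therefore the crux.

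\medskip

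\emph{The non-decreasing case ($M=1$, not achieved).} First I would show that for any such $f$ and any $n$, every zero of $q_{n,f}$ has modulus strictly greater than $M'$ for some explicit $M'$ depending only on the weights, and then that $M'$ can be pushed arbitrarily close to $1$. The natural tool is the reproducing-kernel identity: writing $q_{n,f}(z)f(z)=K_n(z,0)$, a zero $z_0$ of $q_{n,f}$ forces $K_n(z_0,0)=0$ (provided $f(z_0)\neq 0$, which one may arrange by perturbation or handle directly), and one estimates $|K_n(z_0,0)|$ from below away from the origin. Concretely, I expect the estimate to come from comparing $\|q_{n,f}f-1\|_\omega^2 = 1 - q_{n,f}(0)\overline{f(0)}\,\overline{?}$ — more precisely from the Pythagorean relation $\|q_{n,f}f-1\|_\omega^2 + \|q_{n,f}f\|_\omega^2 = 1$ and the lower bound $\|q_{n,f}f\|_\omega \ge |q_{n,f}(0)f(0)|$ (since the $0$th coefficient has weight $\omega_0=1$). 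Combined with $\langle q_{n,f}f,1\rangle_\omega = q_{n,f}(0)f(0)$, this pins down $q_{n,f}(0)f(0)$, and monotonicity of the weights is what prevents the higher coefficients of $q_{n,f}f$ from being large enough to let a zero slip inside a fixed disk. To see $M=1$ exactly, I would produce, for each $r<1$, a function $f$ (e.g. $f(z) = 1 - z/r$ or a finite Blaschke-type factor, rescaled) whose low-order opa has a zero of modulus arbitrarily close to $1$ but never equal — the non-attainment following because equality in the relevant inequality would force $f$ to be a monomial multiple, which is excluded.

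\medskip

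\emph{The case $\omega_{k+n+1} < \omega_{k+1}/4$ ($M<1$, attained).} Here I would construct an explicit witness. Given the indices $k,n$ with $\omega_{k+n+1}<\omega_{k+1}/4$, take $f(z) = z^k(1 - z)$ or, better, $f(z)=z^k g(z)$ with $g$ a low-degree polynomial chosen so that the opa of $1/f$ of the appropriate degree has an off-origin zero; the factor $z^k$ shifts the relevant weights to $\omega_{k+1},\dots,\omega_{k+n+1}$, and the hypothesis $\omega_{k+n+1}<\omega_{k+1}/4$ is exactly the numerical condition needed to make the solution of the $2\times 2$ (or small) subsystem for the leading coefficients have a ratio forcing a root inside $\D$. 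I would then verify attainment: since for a \emph{polynomial} $f$ and fixed $n$ the opa depends continuously on the coefficients of $f$, and the set of admissible $f$ (normalized, $f(0)\neq 0$) together with the location of the smallest-modulus zero is, after compactification, a closed bounded problem, the infimum over this subfamily is achieved; one checks the global infimum $M$ is achieved by the same configuration. The main obstacle I anticipate is pinning down the right witness $f$ and doing the small linear-algebra computation cleanly — showing that the constant $4$ is sharp for the dichotomy, i.e. that $\omega_{k+n+1}<\omega_{k+1}/4$ both suffices to push a zero in and is essentially necessary, via the Pythagorean/reproducing-kernel lower bound of the first half applied with the weights shifted by $k$. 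Balancing these two estimates so that the threshold is exactly $1/4$ is where the real work lies.
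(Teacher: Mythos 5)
Your proposal misses the single idea that makes this theorem tractable, and as a result both halves remain genuinely incomplete. The paper's argument begins by reducing the entire extremal problem to the case $n=1$: if $z_1$ is a zero of $q_{n,f}$ for $n>1$, then writing $\|q_nf-1\|_\omega = \|(z-z_1)\tfrac{q_n}{z-z_1}f-1\|_\omega$ shows $z_1$ is a zero of a \emph{first-order} approximant for the function $\tfrac{q_n}{z-z_1}f$. For $n=1$ one computes explicitly $z_1 = \|zf\|_\omega^2/\langle f,zf\rangle_\omega$, so the whole problem collapses to minimizing $\|zf\|_\omega^2/|\langle f,zf\rangle_\omega|$ over $f$. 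The non-decreasing case is then immediate: Cauchy--Schwarz gives $|\langle f,zf\rangle_\omega|\le\|f\|_\omega\|zf\|_\omega\le\|zf\|_\omega^2$, so $M\ge 1$; the standing hypothesis $\omega_k/\omega_{k+1}\to 1$ gives $M\le 1$; and non-attainment follows because equality in Cauchy--Schwarz would force $zf$ to be a scalar multiple of $f$, which is impossible. Your Pythagorean/reproducing-kernel route for this half never produces a concrete lower bound on $|z_0|$ --- the sentence ``monotonicity of the weights is what prevents the higher coefficients \dots from being large enough'' is precisely the step that needs an argument, and without the reduction to $n=1$ I do not see how to supply one.

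For the second half, your instinct to use a witness of the form $z^kg(z)$ is right, but the specific witness matters: the paper takes $f(z)=z^kT_n\bigl(\tfrac{1+z}{1-z}\bigr)=z^k(1+2z+\cdots+2z^n)$, for which the ratio $\|zf\|_\omega^2/|\langle f,zf\rangle_\omega|$ equals $\bigl(\omega_{k+1}+4\sum_{j=2}^{n+1}\omega_{k+j}\bigr)/\bigl(2\omega_{k+1}+4\sum_{j=2}^{n}\omega_{k+j}\bigr)$, and this is $<1$ exactly when $\omega_{k+n+1}<\omega_{k+1}/4$ --- that is where the constant $4$ comes from, not from a $2\times 2$ subsystem of $Ba=y$. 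Finally, your attainment argument does not work as stated: the infimum ranges over all of $H^2_\omega$, an infinite-dimensional, non-compact family, and ``after compactification, a closed bounded problem'' is not a proof. The actual argument first reduces the extremal problem to polynomials with positive coefficients of degree at most some $N$, shows the extremal polynomials satisfy a three-term recurrence (hence are orthogonal polynomials on the real line), and identifies $M=2/\|J\|$ for an explicit Jacobi matrix $J$; the extremizer is then read off from the associated orthonormal polynomials. This is the hard part of the theorem and your proposal does not engage with it. Also note the theorem does not assert that $1/4$ is necessary, so your plan to prove sharpness of the constant is extra work not required by the statement.
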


In particular, for any of the Dirichlet-type spaces $D_{\alpha},$ if $\alpha \geq 0$ (for example, in $H^2$ or in the classical Dirichlet space) the infimum is $1$, and all the zeros of the optimal approximants lie outside the closed unit disk.  On the other hand, if $\alpha < 0,$ there exist functions $f \in D_{\alpha}$ whose optimal polynomial approximants have zeros inside the unit disk. In particular, in the Bergman space $A^2,$ the authors showed (see Theorem 5.1 of \cite{blSimanek} that $M = 2 \sqrt{2}/3$ and the corresponding extremal function is 
$f(z) = \frac{1}{(1-z/\sqrt{2})^3}.$

The proof of the existence of an extremal in the case that the weights satisfy the required inequality in Theorem \ref{minimal_zero} from \cite{blSimanek} is difficult and relies on machinery from the theory of orthogonal polynomials. Below, we give an outline of the argument involved in the easier parts of the proof and only mention the general ideas in the proof of existence of the extremal. 

\begin{proof}
First note that it is enough to consider $n=1$, since if $z_1$ is a zero of some optimal approximant $q_n$ ($n > 1$) for some function $f$, then since
$$\|q_nf-1\|_\omega= \|(z-z_1) \frac{q_n}{z-z_1} f - 1\|_\omega,$$
$z_1$ is a zero of a first order approximant for some function that is a multiple of $\frac{q_n}{z-z_1} f.$

In that case, if $z_1$ is a zero of a first order approximant for some function $f$, it is not hard to see that 
\begin{equation*}
    z_1 = \frac{\|zf\|_\omega^2}{\langle f, zf \rangle_\omega},
\end{equation*}
and therefore the extremal problem becomes to find
\begin{equation}\label{EP}
    M = \inf \left\{ \frac{\|zf\|_\omega^2}{|\langle f, zf \rangle_\omega|}, f \in H^2_\omega \right\}.
\end{equation}
Using the Cauchy-Schwarz inequality and noticing that if the sequence $\{\omega_k\}_{k \in N}$ is non-decreasing, then $\|zf\|_\omega \geq \|f\|_\omega$ gives that for those weights, $M \geq 1.$ Moreover, the condition on the weights given in the introduction ensure that $M \leq 1.$  Therefore $M = 1$ in that case, and it is not hard to see that equality cannot hold in the Cauchy-Schwarz inequality, which implies that the infimum is not attained. 

On the other hand, if the weights satisfy the condition that $\omega_{k+n+1}< \omega_{k+1}/4,$ the authors of \cite{blSimanek} show that if 
$$f(z) = z^k T_n \left( \frac{1+z}{1-z} \right),$$ where $T_n(g)$ is the $n$-th Taylor polynomial of a function $g$, then the ratio 
$$\frac{\|zf\|^2_\omega}{|\langle f, zf \rangle_\omega|}$$ is strictly less than $1$, by explicit computation.

The remainder of the proof involves reducing the extremal problem \eqref{EP} to one that only involves functions $f$ that are polynomials with positive coefficients and of degree at most $N$. If $Q_N$ is the corresponding extremal polyomial, the authors then show that the coefficients of these extremal polynomials $Q_N$ satisfy a three-term recurrence relation, and are therefore orthogonal polynomials on the real line, which in turn are connected to Jacobi matrices and associated homogeneous linear differential equations. The authors show that if $J$ is the Jacobi matrix with entries 
$J_{ij} = \sqrt{\frac{\omega_j}{\omega_{j+1}}}$ if $|i-j|=1$ and $J_{ij}=0$ otherwise, then $M=\frac{2}{\|J\|},$ where $\|J\|$ is the norm of the matrix $J.$ The authors identify the extremal function in terms of certain orthonormal polynomials associated with $Q_N$ and with the norm of the matrix $J$ (see Corollary 4.5 of \cite{blSimanek}).  
\end{proof}

In general, it is very difficult to calculate $M$ explicitly, and the authors investigate this question in detail, in particular in some other spaces involving integral norms, but in particular, the following question remains open.

\begin{OQ}\label{OpenQuestion1}
    What is $M$ for the spaces $D_{\alpha}$ when $ \alpha < 0,$ $\alpha \neq -1?$ What is the corresponding extremal function?  Given the connection with Jacobi matrices above, this question can be rephrased as, for $\alpha < 0,$ $\alpha \neq -1,$ what is the norm of the Jacobi matrix $J$ defined by $J_{ij} = \sqrt{\frac{(j+1)^{\alpha}}{(j+2)^{\alpha}}}$ if $|i-j|=1$ and $J_{ij}=0$  otherwise?
\end{OQ}
One may also ask the corresponding question for a \emph{fixed} integer 
$n > 1.$ In addition, there are many interesting questions surrounding the extremal function, which satisfies a differential equation and is equal to the derivative of the reproducing kernel evaluated at a particular point of the disk, for certain spaces, which seems to warrant further investigation (see Section 8 of \cite{blSimanek}).

Another direction of inquiry investigates the limit points of the zeros of the optimal polynomial approximants.  Indeed, it turns out that for a general class of weighted spaces where the weights are associated with certain ``regular measures", and for (say) cyclic functions $f$ such that $1/f$ has a singularity on the unit circle, every point of the unit circle is a limit point of the zeros of the optimal polynomial approximants of $1/f.$  This is an analogue of a beautiful theorem of Jentzsch that states that if an analytic function in the unit disk has radius of convergence $1$, then every point on the unit circle is a limit point of the zeros of the Taylor polynomials of that function.  The proof of the theorem follows the outline of the original proof of Jentzsch's theorem, but knowledge of the precise asymptotic behavior of the orthogonal polynomials for regular measures connected to the opas is required. For details, see \cite[Theorem 6.2]{blSimanek}. In fact, the authors prove that asymptotically, for each $\varepsilon > 0,$ the zeros of the $n$-th opa for such a function $f$ lie in a disk of radius $1+\varepsilon.$ (See Theorem 6.1 in \cite{blSimanek}.)  However, more detail on the precise behavior of the zeros is needed. For instance, the following question is open. 

\begin{OQ}\label{OpenQuestion2}
Given a cyclic function $f \in H^2_{\omega},$ and given $\zeta \in \T,$ is there a sector with vertex at $\zeta$ that is devoid of zeros of opas?  More generally, are there regions or rays that the zeros avoid?
\end{OQ}

Finally, another line of investigation related to zeros involves cyclicity.  In \cite[Theorem 6.1]{BKLSS}, the authors find a characterization of cyclicity of a function $f \in H^2$ based on the relationship between the zeros of the opas and the value of $f$ at the origin.  Since cyclic functions in $H^2$ are outer functions and are well-understood, this characterization is not that useful for $H^2,$ but perhaps this idea can be extended to other spaces.  Indeed, one might hope that such a characterization would give insight into the Brown and Shields Conjecture.  Thus, although vague, the following is open. 

\begin{OQ}\label{OpenQuestion3}
For a given space $H^2_{\omega}$ (or, say for the Bergman space $A^2$ or the classical Dirichlet space $\mathcal{D}$), is there a characterization of a cyclic function $f$ that relies on the behavior of the zeros of the opas for $1/f$?
\end{OQ}

\subsection{Several variable spaces and Shanks Conjecture}

When considering Hilbert spaces of analytic functions of several variables, the zeros of opas are less well-understood, partly because zeros of functions of several variables are no longer isolated.  On the other hand, this topic was of great interest in the engineering literature of the 70s in connection with filters, as discussed earlier.  Two recent papers (see \cite{SS,SS2}) discuss the several variable situation in detail, so we just mention a few relevant items of interest here. 

For simplicity, let us consider the Dirichlet-type spaces of the bidisk defined as follows.  Let $(\alpha_1,\alpha_2) \in \R^2.$ Given $$f(z_1,z_2) = \sum_{j=0}^{\infty} \sum_{k=0}^{\infty} a_{j,k}z_1^jz_2^k$$ analytic in $\D^2,$ we will say $f \in \mathcal{D}_{\alpha_1,\alpha_2}$ if 
\[ \|f\|_{\alpha_1,\alpha_2}^2 := \sum_{j=0}^{\infty} \sum_{k=0}^{\infty} (j+1)^{\alpha_1}(k+1)^{\alpha_2} |a_{j,k}|^2 < \infty.\]  This makes $\mathcal{D}_{\alpha_1,\alpha_2}$ into a reproducing kernel Hilbert space.  In order to define optimal polynomial approximants, as the authors of \cite{SS} note, one needs to order the monomials $z_1^jz_2^k$ in some fashion. Let us assume that we have chosen such an ordering (for instance, the degree lexicographic ordering), and let $\mathcal{P}_n$ be the span of the first $n+1$ monomials.  In this way, given $f \in \mathcal{D}_{\alpha_1,\alpha_2},$ not identically $0$, we can define, as before, the $n$-th optimal polynomial approximant of $1/f$ in $\mathcal{D}_{\alpha_1,\alpha_2}$ to be the polynomial $q_n$ that minimizes
$\|pf-1\|_{\alpha_1,\alpha_2}$ among all polynomials $p \in \mathcal{P}_n$.

In \cite{BCLSS2}, the authors extended their results from \cite{BCLSS} to the case when $\alpha_1= \alpha_2 = \alpha$ to obtain rates of decay for $\|q_nf - 1\|_{\alpha,\alpha}$ for functions $f$ analytic in the closed unit bidisk with no zeros in the bidisk. They also gave examples of polynomials with no zeros on the bidisk that are not cyclic in $\mathcal{D}_{\alpha,\alpha}$ for $\alpha > 1/2$.  Cyclic polynomials in these Dirichlet spaces of the bidisk (for $\alpha_1= \alpha_2$) were completely characterized in \cite{BKKLSS}.  In \cite{KKRS}, the authors characterized cyclic polynomials in the anisotropic Dirichlet spaces, that is, the more general case of $\mathcal{D}_{\alpha_1,\alpha_2}$ when $\alpha_1$ and $\alpha_2$ may be different. 

In \cite{SS}, the authors discuss the history of the problem of using optimal polynomial approximants to design digital filters of two variables.  In this quest, in \cite{STJ}, the authors conjectured that if $f$ is a polynomial that is zero-free in the bidisk, then optimal polynomial approximants of $1/f$ in the Hardy space of the bidisk would also be zero-free in the bidisk.  This became known as Shanks Conjecture, which was disproved in \cite{GK}. A simplified counter-example can be found in \cite{SS}.  However, the following weaker version of the Shanks conjecture remains open.   

\begin{OQ}\label{Shanks}[Weak Shanks Conjecture]
Let $f(z_1,z_2)$ be a polynomial that doesn't vanish in $\overline{\D^2}$.  Then the optimal polynomial approximant of $1/f$ in $H^2(\D^2)$ is zero-free in $\D^2$.
\end{OQ}

It's interesting to know that although this result is unknown in $H^2(\D^2)$, it does fail in other function spaces of the bidisk, including the Bergman space $A^2(\D^2)$ (see Example 22 of \cite{SS}).

\section{Convergence Results} 

One set of questions of great interest but with little known so far is about rate of convergence of opas for a given cyclic function $f$. Since $f$ is cyclic, we know that $\|q_nf-1\|_{\omega}$ approaches $0$ at the fastest possible rate in terms of the degree.  What is that rate of convergence, for a given $f$?  In addition, since norm convergence implies uniform convergence on compact subsets of the open unit disk, we know that $q_nf$ approaches $1$ pointwise, uniformly on compact subsets of $\D,$ but what more can be said about convergence on the circle?  In what follows, we discuss what is known about these questions and some related open problems. 

\subsection{Norm Convergence}

In \cite{BCLSS}, the authors studied the rate of decay of $\|q_nf-1\|_{\alpha}$ for certain simple cyclic functions $f$ in the Dirichlet-type spaces $D_{\alpha}$ for $\alpha \leq 1.$ Note that for $\alpha >1,$ a function is cyclic if and only if it does not vanish in the closed disk. Also, if $f$ is such that $1/f$ is analytic in the closed disk, then it is easy to see that the rate of decay of $\|q_nf-1\|_{\alpha}$ is exponential. Therefore the question of rate of decay is most interesting for $\alpha \leq 1$ and for functions $f$ such that $1/f$ has a singularity on the unit circle.  The simplest possible function to consider then is $f(z) = 1-z.$

For the function $1-z,$ as we saw in Example \ref{1-z}, an explicit formula for $q_n$ can be obtained, and moreover, the authors of \cite{BCLSS} calculated the rate of decay of $\|q_nf-1\|_{\alpha}$ and showed that this rate of decay does not change for \emph{any} polynomial whose zeros are outside the unit disk (with at least one zero on the circle). They then extended that result to any function that admits an analytic extension to the closed unit disk.  More specifically, their result is the following. 

\begin{thm}\label{rate_convergence}
Let $\alpha \leq 1$ and let $f$ be analytic in the closed unit disk and have zeros outside $\D.$  Then for each $n \in \N$ there exists a constant $C$ independent of $n$ such that
\begin{equation*}
    \|q_nf-1\|^2_{\alpha} \leq \begin{cases}
\frac{C}{(n+1)^{1-\alpha}} \,\, \mbox{for } \alpha < 1 \\
\frac{C}{\log^+(n+1)} \,\, \mbox{for } \alpha =1.
\end{cases}
\end{equation*}
Moreover, if $f$ has a zero on the unit circle, this rate of convergence is sharp.
\end{thm}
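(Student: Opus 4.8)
The plan is to establish the two inequalities separately. Throughout, write $\omega_k=(k+1)^\alpha$ and $S_m=\sum_{j=0}^m(j+1)^{-\alpha}$, so that $S_{n+1}\asymp(n+1)^{1-\alpha}$ for $\alpha<1$ and $S_{n+1}\asymp\log(n+1)$ for $\alpha=1$; the claim is then equivalent to $\|q_nf-1\|_\alpha^2\asymp 1/S_{n+1}$, with the lower estimate required only when $f$ vanishes somewhere on $\T$. (We read ``analytic in the closed disk'' as ``analytic on a neighbourhood of $\overline{\D}$'', which is what the argument uses; in particular $f$ then has only finitely many zeros on $\T$.)

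For the \emph{lower} bound, assume first that $f$ is a polynomial with $f(\zeta)=0$ for some $\zeta\in\T$. Then $P:=q_nf-1$ is a polynomial of degree at most $N':=n+\deg f$, and $P(\zeta)=q_n(\zeta)f(\zeta)-1=-1$ no matter what $q_n(\zeta)$ is. Splitting the weights and using $|\zeta|=1$, the Cauchy--Schwarz inequality applied to $P(\zeta)=\sum_{k=0}^{N'}\widehat P(k)\zeta^k$ gives
\[
1=|P(\zeta)|^2\le\Big(\sum_{k=0}^{N'}\omega_k|\widehat P(k)|^2\Big)\Big(\sum_{k=0}^{N'}\omega_k^{-1}\Big)=\|P\|_\alpha^2\,S_{N'},
\]
so $\|q_nf-1\|_\alpha^2\ge 1/S_{N'}\gtrsim 1/S_{n+1}$, since $S_{N'}/S_{n+1}\to 1$. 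For an $f$ that is only analytic on a neighbourhood of $\overline{\D}$, factor $f=Bh$, where $B(z)=\prod_i(1-z/\zeta_i)^{m_i}$ carries all zeros of $f$ on $\T$ and $h$ is analytic and zero-free on a neighbourhood of $\overline{\D}$; then $1/h$ is a multiplier of $\mathcal D_\alpha$. From $pf-1=h\,(pB-1/h)$ one gets $\|q_nf-1\|_\alpha\gtrsim\inf_{p\in\mathcal P_n}\|pB-1/h\|_\alpha$, and replacing $1/h$ by its degree-$d$ Taylor polynomial $t$ with $d=o(n)$ (the tail is geometrically small in $\mathcal D_\alpha$) reduces the problem to bounding below $\|pB-t\|_\alpha$ for polynomials $pB-t$ of degree $\asymp n$ whose value at $\zeta$ is $-t(\zeta)\to-1/h(\zeta)\neq0$; the displayed Cauchy--Schwarz estimate then applies again and yields $\|q_nf-1\|_\alpha^2\gtrsim 1/S_{n+1}$.

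For the \emph{upper} bound, by optimality of $q_n$ it is enough to exhibit, for each admissible $f$, a competitor $p\in\mathcal P_n$ with $\|pf-1\|_\alpha^2\lesssim 1/S_{n+1}$. The base case $f(z)=1-z$ is handled by the formula of \cite{FMS} quoted above: setting $E_n:=1-q_n(z)(1-z)$, a short computation shows $\widehat{E_n}(k)=(k+1)^{-\alpha}/S_{n+1}$ for $0\le k\le n+1$ (and $0$ otherwise), whence $\|q_nf-1\|_\alpha^2=\|E_n\|_\alpha^2=1/S_{n+1}$ exactly. Composing with the rotations $z\mapsto z/\zeta$, which are isometries of $\mathcal D_\alpha$ fixing $\mathcal P_N$ and the constant $1$, we obtain for each $\zeta\in\T$ a polynomial $q_N^{(\zeta)}$ of degree $N$ with error $E_N^{(\zeta)}:=1-q_N^{(\zeta)}(z)(1-z/\zeta)$ whose coefficients have modulus $(k+1)^{-\alpha}/S_{N+1}$. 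For a general $f=Bh$ as above, put $p:=\widetilde q\,t_d$ with $\widetilde q:=\prod_i\big(q_N^{(\zeta_i)}\big)^{m_i}$ and $t_d$ the degree-$d$ Taylor polynomial of $1/h$; choosing $d=o(n)$ and $N\asymp n$ keeps $\deg p\le n$. Then $pf-1=(1-F)(1-G)-1=-F-G+FG$ with $F:=1-\widetilde qB=1-\prod_i\big(1-E_N^{(\zeta_i)}\big)^{m_i}$ and $G:=1-t_dh$; since $1/h$ is analytic past $\overline{\D}$, $\|G\|_\alpha$ is geometrically small in $d$ and the multiplier norm $\|G\|_{\mathrm{mult}}$ is bounded uniformly, so $\|pf-1\|_\alpha\le\big(1+\|G\|_{\mathrm{mult}}\big)\|F\|_\alpha+\|G\|_\alpha$ and everything reduces to $\|F\|_\alpha$. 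Expanding the product defining $F$, each summand is a product of rotated copies of $E_N$ of total degree $j\in\{1,\dots,\sum_i m_i\}$; as the coefficients of $E_N$ are nonnegative, this summand is dominated coefficientwise by $E_N^{\,j}$, so $\|F\|_\alpha\le C(\{m_i\})\max_{1\le j\le\sum m_i}\|E_N^{\,j}\|_\alpha$. Granting the estimate $\|E_N^{\,j}\|_\alpha\le C_{j,\alpha}\|E_N\|_\alpha$, we conclude $\|F\|_\alpha\lesssim\|E_N\|_\alpha=1/\sqrt{S_{N+1}}\lesssim 1/\sqrt{S_{n+1}}$, which gives the upper bound.

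The main obstacle is precisely the estimate $\|E_N^{\,j}\|_\alpha\le C_{j,\alpha}\|E_N\|_\alpha$: one must show that forming convolution powers of the nearly $\ell^1$-normalized, slowly varying sequence $\big((k+1)^{-\alpha}/S_{N+1}\big)_{0\le k\le N+1}$ does not inflate the $\mathcal D_\alpha$-norm by more than a $j$-dependent constant. I expect this to follow from a direct estimate of the $j$-fold convolution of $\big((k+1)^{-\alpha}\big)_k$ (whose partial sums grow like $\ell^{(1-\alpha)j-\alpha}$ for $\alpha<1$, resp.\ $\ell^{-1}(\log\ell)^{j-1}$ for $\alpha=1$) combined with the asymptotics of $S_{N+1}$; morally, this is the quantitative statement that the rate of convergence depends neither on the number of boundary zeros of $f$ nor on their orders. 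A minor, bookkeeping obstacle is to confirm that, after dividing the degree budget $n$ among the factors of $B$ and the Taylor polynomial of $1/h$, one still has $N\asymp n$, so that $1/S_{N+1}\asymp 1/S_{n+1}$.
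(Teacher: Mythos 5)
The survey does not actually prove Theorem \ref{rate_convergence}; it is quoted from \cite{BCLSS}, so your proposal has to be measured against the proof given there --- and it is essentially a reconstruction of that proof. The lower bound via evaluation at a boundary zero ($P(\zeta)=-1$ for the polynomial $P=q_nf-1$) followed by Cauchy--Schwarz against the weights $\omega_k^{-1}$ is exactly the right mechanism and is valid for every $\alpha\le 1$, since it only uses positivity of the weights; the reduction of a general $f$ to $pB-t$ via the multiplier $1/h$ is also sound, because a function analytic and zero-free on a neighbourhood of $\overline{\D}$ has an inverse that multiplies every $\mathcal{D}_\alpha$. On the upper-bound side, your computation $\widehat{E_n}(k)=\omega_k^{-1}/S_{n+1}$ and $\|E_n\|_\alpha^2=1/S_{n+1}$ is correct, rotations are isometries of $\mathcal{D}_\alpha$, the coefficientwise domination of each term of $F$ by a power $E_N^{\,j}$ is legitimate (triangle inequality on the convolution, then monotonicity of the norm under coefficientwise domination), and the degree bookkeeping does give $N\asymp n$ and hence $S_{N+1}\asymp S_{n+1}$.

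The one genuine gap is the estimate $\|E_N^{\,j}\|_\alpha\le C_{j,\alpha}\|E_N\|_\alpha$, which you flag but do not prove; this is precisely the technical core of the argument in \cite{BCLSS}, so the proposal as written delegates its hardest step to a plausibility claim. The claim is true, but the convolution asymptotic you quote is off: for $\alpha<1$ the $j$-fold convolution of $\bigl((k+1)^{-\alpha}\bigr)_k$ at $\ell$ is of order $\ell^{\,j(1-\alpha)-1}$, not $\ell^{\,j(1-\alpha)-\alpha}$ (check $j=1$), while for $\alpha=1$ it is $\ell^{-1}(\log\ell)^{j-1}$ as you say. With the correct exponent the computation closes for $0<\alpha\le 1$: e.g.\ for $\alpha<1$,
\begin{equation*}
\|E_N^{\,j}\|_\alpha^2\;\lesssim\;S_{N+1}^{-2j}\sum_{\ell\le j(N+1)}(\ell+1)^{\alpha}\,\ell^{\,2j(1-\alpha)-2}\;\asymp\;N^{-2j(1-\alpha)}\,N^{(2j-1)(1-\alpha)}\;=\;N^{-(1-\alpha)}\;\asymp\;\frac{1}{S_{N+1}},
\end{equation*}
and similarly with logarithms when $\alpha=1$. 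For $\alpha\le 0$ the same bound follows more cheaply from the facts that the coefficients of $E_N^{\,j}$ are nonnegative with total mass $1$ and supremum $O(1/N)$, and that their mass concentrates where $\ell\asymp N$, so the weight $(\ell+1)^{\alpha}\asymp N^{\alpha}$ there. Writing this case analysis out (including the range $N\le\ell\le j(N+1)$, where the truncation of the convolution only helps) is what is needed to turn your outline into a complete proof; everything else in the proposal is in order.
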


In \cite{FMS}, the authors showed that the rate of decay of 
$\|q_nf-1\|_{\omega}^2$ for $f(z) = 1-z$ is precisely equal to 
$\left( \frac{1}{\sum_{k=0}^{n+1} \frac{1}{\omega_k}} \right)$; the authors of \cite{SS} then exploited that single-variable rate to identify rates of decay for various analogues of $1-z$ in some of the several variable spaces. In \cite{RSS}, the authors came up with an example of a lacunary series of the type $f(z) = 1 + \sum_{k=1}^{\infty} a_k z^{2^k}$ whose corresponding rate of decay in the Dirichlet-type spaces $\mathcal{D}_{\alpha}$ is \emph{slower} than the rate in  Theorem \ref{rate_convergence}.  For instance, they construct a lacunary function $f$ in $H^2$ such that $1/(\log n)^{1+\varepsilon}$ is a lower bound for the rate of decay of 
$\|q_nf-1\|_{H^2}^2$.   However, for most other functions, the rate of decay of $\|q_nf-1\|_{\alpha}$ is unknown, and thus, the following general problem  is essentially open for most functions. 

\begin{OQ}\label{OQ_RateConvergence}
Given a function $f \in D_{\alpha}$ such that $f$ does not have an analytic extension to the closed disk, find the rate of decay of $\|q_nf-1\|_{\alpha}$.
\end{OQ}

In \cite{Ch}, the author considered a slightly different convergence question in the Hardy space $H^2$ related to double least squares. Given a \emph{polynomial} $f$ of degree $n$, for each $k \in \N,$ let $q_k$ be the $k$-th optimal polynomial approximant (in $H^2$) of $1/f.$ Then let $Q_{n,k}$ be the $n$-th optimal polynomial approximant of $1/q_k.$ Since we expect $q_k$ to be some kind of approximation of $1/f$ and $Q_{n,k}$ some kind of approximation of $1/q_k,$ we expect $Q_{n,k}$ to approximate $f$ as $k \rightarrow \infty.$  Indeed, this will be the case (as shown in \cite{Ch}) for cyclic polynomials $f$ (i.e., ones that have no zeros in $\D.$)  In fact, Chui showed that the limit of the polynomials $Q_{n,k}$ is a polynomial of degree $n$ that preserves the zeros of $f$ that are outside $\D$ and contains in addition the reflection of the zeros of $f$ that are inside $\D.$  We summarize these two results of Chui in the following theorem.

\begin{thm}[\cite{Ch},Theorems 2.1 and 3.1] 
Let $f(z) = p(z) \cdot \prod_{j=1}^m (\alpha_j - z)$ be a polynomial of degree $n$ where $\alpha_j\in \D\setminus\{0\}$ and $p$ is a polynomial of degree $n-m$ that has no zeros in $\D.$ Let $q_k$ be the $k$-th optimal polynomial approximant in $H^2$ of $1/f,$ and let $Q_{n,k}$ be the $n$-th optimal polynomial approximant of $1/q_k.$ Then as $k \rightarrow \infty,$
$\|Q_{n,k} - \tilde{f}\|_{H^2} \rightarrow 0,$ where 
$$\tilde{f}(z) = p(z) \cdot \prod_{j=1}^m (1/\bar{\alpha_j} - z).$$  As a consequence, $\|Q_{n,k} - f\|_{H^2} \rightarrow 0$ if and only if $f$ is a cyclic polynomial, i.e., has no zeros in $\D.$ 
\end{thm}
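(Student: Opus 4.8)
The plan is to first pin down the limit of the first-stage approximants $q_k$ as $k\to\infty$, and then run a soft continuity argument on the second optimization. \textbf{Step 1 (the limit of $q_k$).} Writing each factor as $\alpha_j-z=b_{\alpha_j}(z)(1-\bar\alpha_j z)$, where $b_{\alpha_j}(z)=(\alpha_j-z)/(1-\bar\alpha_j z)$ is the elementary Blaschke factor, one gets the factorization $f=\overline{B(0)}\,B\,\tilde f$, where $B:=\prod_{j=1}^m b_{\alpha_j}$ is the finite Blaschke product formed from the zeros of $f$ in $\D$ and $\tilde f$ is exactly the polynomial in the statement (of degree $n$, zero-free in $\D$, hence outer). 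Since $q_kf$ is the orthogonal projection of $1$ onto $f\cdot\mathcal{P}_k$, and the subspaces $f\cdot\mathcal{P}_k$ increase to the smallest closed shift-invariant subspace containing $f$, which by Beurling's theorem is $BH^2$, it follows that $q_kf\to P_{BH^2}1$ in $H^2$; moreover $P_{BH^2}1$ equals the reproducing kernel of $BH^2$ at the origin, namely $\overline{B(0)}B$. Dividing by the polynomial $f$, and using a normal-families/maximum-modulus argument to pass across the zeros $\alpha_j\in\D$ of $f$ (which are isolated from the remaining zeros of $f$ since $p$ is zero-free in $\D$), one concludes $q_k\to\overline{B(0)}B/f=1/\tilde f$ uniformly on compact subsets of $\D$.

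\textbf{Step 2 ($Q_{n,k}q_k\to 1$ in $H^2$).} Since $Q_{n,k}q_k$ is the projection of $1$ onto $q_k\cdot\mathcal{P}_n$ and $\tilde f\in\mathcal{P}_n$,
\[ \|Q_{n,k}q_k-1\|_{H^2}=\inf_{P\in\mathcal{P}_n}\|Pq_k-1\|_{H^2}\le\|\tilde f q_k-1\|_{H^2}. \]
Now $\overline{B(0)}\,B\,(\tilde f q_k-1)=q_kf-\overline{B(0)}B\to 0$ in $H^2$ by Step 1, and $\|\overline{B(0)}Bh\|_{H^2}=|B(0)|\,\|h\|_{H^2}$ for every $h\in H^2$ because $|B|\equiv 1$ on $\T$; since $B(0)=\prod_j\alpha_j\neq 0$, this forces $\|\tilde f q_k-1\|_{H^2}\to 0$, and hence $Q_{n,k}q_k\to 1$ in $H^2$.

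\textbf{Step 3 (conclusion).} Because $\tilde f$ is zero-free in $\D$, the limit $1/\tilde f$ of Step 1 is analytic and nonvanishing there, so on each compact $K\subset\D$ the functions $q_k$ are eventually bounded below in modulus on $K$; dividing $Q_{n,k}q_k\to 1$ (which holds locally uniformly in $\D$, point evaluations being bounded on $H^2$) by $q_k$ gives $Q_{n,k}\to\tilde f$ locally uniformly in $\D$. Since $Q_{n,k}$ and $\tilde f$ all lie in the finite-dimensional space $\mathcal{P}_n$, local uniform convergence forces convergence of the Taylor coefficients, i.e.\ $\|Q_{n,k}-\tilde f\|_{H^2}\to 0$. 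For the stated equivalence: if $f$ is zero-free in $\D$ then $m=0$, $\tilde f=f$, and $Q_{n,k}\to f$; if $\alpha_j\in\D$ for some $j$, then $\tilde f(\alpha_j)\neq 0=f(\alpha_j)$, so $\tilde f\neq f$ and $\|Q_{n,k}-f\|_{H^2}\to\|\tilde f-f\|_{H^2}>0$. Since a polynomial is cyclic in $H^2$ precisely when it has no zeros in $\D$, we get $\|Q_{n,k}-f\|_{H^2}\to 0$ if and only if $f$ is cyclic.

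The main obstacle is Step 1: correctly identifying $P_{BH^2}1=\overline{B(0)}B$ and legitimizing the passage from the $H^2$-limit of $q_kf$ to a locally uniform limit of $q_k$ itself across the interior zeros of $f$. A related subtlety worth noting is that $1/\tilde f$ need not belong to $H^2$ (when $p$ has zeros on $\T$), so one should not expect $q_k$ to converge in $H^2$; the argument is arranged to use only locally uniform convergence of $q_k$ together with the $H^2$-convergence of $Q_{n,k}q_k$, which avoids this issue.
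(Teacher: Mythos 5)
Your argument is correct. Note that the survey states this theorem without proof, simply citing Chui's Theorems 2.1 and 3.1, so there is no in-paper argument to match; what you have written is a self-contained proof, and it is a genuinely different (and cleaner) route than Chui's original, which works directly with the Toeplitz normal equations for least-squares inverses in the spirit of Robinson's predictive decomposition. The key identity $f=\overline{B(0)}\,B\,\tilde f$ is verified correctly (since $\alpha_j-z=b_{\alpha_j}(z)(1-\bar\alpha_j z)$ and $\prod_j(1-\bar\alpha_j z)=\prod_j\bar\alpha_j\cdot\prod_j(1/\bar\alpha_j-z)$ with $B(0)=\prod_j\alpha_j$), the identification $\overline{\bigcup_k f\mathcal{P}_k}=[f]=BH^2$ and $P_{BH^2}1=\overline{B(0)}B$ is right, and the isometry trick in Step 2 --- converting $q_kf\to\overline{B(0)}B$ into $\tilde fq_k\to1$ by dividing out the inner factor, then comparing with the projection defining $Q_{n,k}$ --- is exactly the leverage needed. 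Step 3 closes the loop legitimately: local uniform convergence of $q_k$ to the nonvanishing function $1/\tilde f$ (the maximum-modulus passage across the $\alpha_j$ is standard and you correctly note $p$ has no zeros in $\D$, so $1/\tilde f$ is analytic at each $\alpha_j$) gives $Q_{n,k}\to\tilde f$ locally uniformly, and finite-dimensionality of $\mathcal{P}_n$ upgrades this to norm convergence; the final equivalence follows since $\tilde f(\alpha_j)\neq0=f(\alpha_j)$ whenever $m\geq1$. Your closing caveat that $1/\tilde f$ need not lie in $H^2$ when $p$ vanishes on $\T$, and that the argument deliberately avoids needing $q_k\to1/\tilde f$ in norm, is well taken. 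What this approach buys over Chui's is conceptual transparency (the limit $\overline{B(0)}B$ of $q_kf$ is just a reproducing kernel of a Beurling subspace) at the cost of invoking Beurling's theorem; the only points worth making explicit in a final write-up are that $q_k\not\equiv0$ (so $Q_{n,k}$ is defined --- immediate from $f(0)\neq0$ and Theorem \ref{H2OP}) and the one-line verification that $\deg\tilde f=n$ so that $\tilde f\in\mathcal{P}_n$ is an admissible competitor in Step 2.
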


Izumino (see \cite{Iz}) extended Chui's results to functions 
$f \in H^{\infty}$ such that $1/f \in H^{\infty}$ using operator theory methods. He also proved a conjecture that Chui had considered related to
double least squares of \emph{lower} degree than the degree of the original polynomial $f$. More specifically, he proved the following.
 
\begin{thm}[Theorem 3.5 of \cite{Iz}]
Let $f(z) = p(z) \cdot \prod_{j=1}^m (\alpha_j - z),$ where $|\alpha_j|=1$ for $j=1, \ldots, m$ and $p \in H^{\infty}$ is an outer function. Then for each $n=0, \ldots, m-1,$ $\|Q_{n,k} \|_{H^2} \rightarrow 0$ as 
$k \rightarrow \infty.$
\end{thm}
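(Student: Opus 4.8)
The plan is to combine the outerness of $f$ (equivalently, its cyclicity in $H^2$) with a degree count: since each $Q_{n,k}$ has degree at most $m-1$ while $f$ has $m$ zeros on $\T$, any nonzero subsequential limit of $\{Q_{n,k}\}_k$ would force a polynomial of degree $\le m-1$ to absorb all $m$ boundary zeros of $f$, which is impossible; hence $Q_{n,k}\to 0$. The first thing I would record is the behaviour of the inner approximants $q_k$: each factor $\alpha_j-z$ with $|\alpha_j|=1$ is outer, so $\prod_{j=1}^m(\alpha_j-z)$ is outer, and therefore $f=p\cdot\prod_{j=1}^m(\alpha_j-z)$ is outer with $f(0)\ne 0$. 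By the cyclicity characterization recalled in the Introduction, $\|q_kf-1\|_{H^2}\to 0$; since $H^2$-convergence implies uniform convergence on compact subsets of $\D$ and $f$ is zero-free in $\D$, this gives $q_k\to 1/f$ uniformly on compact subsets of $\D$, and in particular there is $c>0$ with $|q_k(z)|\ge c$ on the circle $\{|z|=\tfrac12\}$ for all large $k$.

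Next I would establish a uniform bound $\sup_k\|Q_{n,k}\|_{H^2}<\infty$. Since $Q_{n,k}q_k$ is the orthogonal projection of $1$ onto $q_k\mathcal{P}_n$, we have $\|Q_{n,k}q_k\|_{H^2}\le 1$. Restricting the $H^2$-norm to the circle $\{|z|=\tfrac12\}$, where $|q_k|\ge c$, gives
\begin{equation*}
\frac{1}{2\pi}\int_0^{2\pi}|Q_{n,k}(\tfrac12 e^{i\theta})|^2\,d\theta\ \le\ \frac{1}{c^{2}}\cdot\frac{1}{2\pi}\int_0^{2\pi}|(Q_{n,k}q_k)(\tfrac12 e^{i\theta})|^2\,d\theta\ \le\ \frac{1}{c^{2}},
\end{equation*}
and comparing Taylor coefficients of $Q_{n,k}\in\mathcal{P}_n$ then yields $\|Q_{n,k}\|_{H^2}^2\le 4^{n}/c^{2}$ for all large $k$.

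Now for the core of the argument. If $\|Q_{n,k}\|_{H^2}\not\to 0$, then, $\mathcal{P}_n$ being finite dimensional and $\{Q_{n,k}\}$ bounded, a subsequence (not relabeled) converges to some $Q_*\in\mathcal{P}_n$ with $Q_*\not\equiv 0$; along it $Q_{n,k}q_k\to Q_*/f$ uniformly on compact subsets of $\D$. Hence for every $r<1$,
\begin{equation*}
\frac{1}{2\pi}\int_0^{2\pi}|(Q_*/f)(re^{i\theta})|^2\,d\theta\ =\ \lim_{k}\frac{1}{2\pi}\int_0^{2\pi}|(Q_{n,k}q_k)(re^{i\theta})|^2\,d\theta\ \le\ 1,
\end{equation*}
so letting $r\to 1$ we get $Q_*/f\in H^2$. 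Multiplying by the bounded analytic function $p$ gives $Q_*/\prod_{j=1}^m(\alpha_j-z)\in H^2$; but this is a rational function whose only possible poles are the points $\alpha_j\in\T$, and a rational function in $H^2$ must be analytic on $\overline{\D}$, so $\prod_{j=1}^m(\alpha_j-z)$ divides $Q_*$ in $\C[z]$. That forces $\deg Q_*\ge m > n\ge\deg Q_*$, which is impossible for $Q_*\not\equiv 0$. Therefore every subsequential limit of $\{Q_{n,k}\}$ is $0$, i.e.\ $\|Q_{n,k}\|_{H^2}\to 0$.

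The step I expect to be the real obstacle is the passage to the limit in the core argument: one has to be certain that $Q_*/f$ genuinely lands in $H^2$, and not merely in a weaker space such as $N^+$ where a nonzero $Q_*/f$ could survive, and it is precisely the uniform bound of the second step that makes it legitimate to take $k\to\infty$ inside the circle means. Everything else is bookkeeping, and this route displays the role of the hypothesis $n\le m-1$ quite transparently. (Izumino's own proof proceeds instead through operator theory for analytic Toeplitz operators, which has the advantage of simultaneously handling the more general case $f\in H^\infty$ with $1/f\in H^\infty$.)
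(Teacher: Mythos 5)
Your argument is correct, but note that the survey itself gives no proof of this statement: it is quoted verbatim from Izumino's paper, whose own argument proceeds through generalized inverses of analytic Toeplitz operators (that is the ``operator theory methods'' alluded to in the surrounding text). Your route is genuinely different and considerably more elementary: cyclicity of the outer function $f$ gives $q_k\to 1/f$ locally uniformly; the contractivity of orthogonal projection gives $\|Q_{n,k}q_k\|_{H^2}\le 1$, which, combined with the lower bound for $|q_k|$ on $\{|z|=\tfrac12\}$ and the equivalence of norms on $\mathcal{P}_n$, yields precompactness of $\{Q_{n,k}\}$; and then Fatou-type passage to the limit in the integral means forces any subsequential limit $Q_*$ to satisfy $Q_*/\prod_{j=1}^m(\alpha_j-z)\in H^2$, which is impossible for $0\not\equiv Q_*\in\mathcal{P}_n$ with $n\le m-1$. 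What your approach buys is transparency about why the hypothesis $n\le m-1$ matters (each boundary zero of $f$ must be absorbed by $Q_*$); what Izumino's buys is uniformity over the more general setting $f\in H^\infty$, $1/f\in H^\infty$, treated in the same framework as his extension of Chui's convergence theorem. Two small points deserve an explicit line in a written-up version: (i) $Q_{n,k}$ is well defined because $q_k\not\equiv 0$, which follows from $f(0)=p(0)\prod_j\alpha_j\neq 0$ (the projection of $1$ onto $f\cdot\mathcal{P}_k$ vanishes only if $f(0)=0$); and (ii) the assertion that a rational function lying in $H^2$ has no poles on $\T$ should be justified, e.g.\ by comparing the growth bound $|g(z)|\le \|g\|_{H^2}\left(1-|z|^2\right)^{-1/2}$ valid for all $g\in H^2$ with the blow-up of order at least $(1-r)^{-1}$ along the radius to a pole on $\T$. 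With those two remarks added, the proof is complete.
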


Notice that the convergence result of Chui's is a result in a finite dimensional space, the space of polynomials of degree at most $n,$ and therefore convergence of the \emph{double} least-squares also happens for instance uniformly in the closed disk.  In general, though, if $f$ is a cyclic function, we know $(q_nf-1)(z)$ converges to $0$ on compact subsets of the open unit disk, because of the norm convergence, but what happens on the unit circle? For a ``good" function, should we expect that $q_n(\zeta) \rightarrow 1/f(\zeta)$ for $\zeta \in \T$? This leads to another series of questions that concern pointwise convergence of optimal polynomial approximants on the unit circle, which we discuss in the next section. 

\subsection{Pointwise convergence}

The boundary behavior of optimal polynomial approximants depends heavily on the function $f$ whose inverse they are approximating. In two recent papers (\cite{BMS2, BMS}), the authors investigate two distinct phenomena. In \cite{BMS2}, the authors show that there exist many $H^2$ functions $f$ whose opas on the unit circle can approximate any complex number. This is a kind of universality phenomenon.  More precisely, they proved a theorem that implies the following.

\begin{thm}[\cite{BMS2}]\label{universality}
For any $\zeta \in \T,$ there exists a $G_{\delta}$-dense set of functions $f$ in $H^2$ (or in the classical Dirichlet space) with corresponding optimal approximants $q_n$ such that the set of points $\{q_n(\zeta)\}$ is dense in $\C.$ 
\end{thm}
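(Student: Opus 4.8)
The plan is to run a Baire category argument in $H^2$; the classical Dirichlet space case is entirely analogous, using the corresponding weighted orthonormal polynomials. Fix $\zeta\in\T$ and a countable dense set $\{w_j\}_{j\geq 1}\subset\C$. For $j,m\in\N$ put
\[
V_{j,m}=\bigcup_{n\geq 1}\bigl\{\,f\in H^2\setminus\{0\}:\ |q_{n,f}(\zeta)-w_j|<\tfrac1m\,\bigr\},
\]
where $q_{n,f}$ denotes the $n$-th opa of $1/f$. Each set in this union is open: by Theorem \ref{LinearSys} the coefficient vector of $q_{n,f}$ is $B(f)^{-1}y(f)$, the Gram matrix $B(f)$ is positive definite for every $f\neq 0$ and depends continuously on $f$, $y(f)=(\overline{f(0)},0,\dots,0)^T$ is continuous, and inversion is continuous on invertible matrices, so $f\mapsto q_{n,f}(\zeta)=\sum_{k=0}^n\bigl(B(f)^{-1}y(f)\bigr)_k\zeta^k$ is continuous on $H^2\setminus\{0\}$. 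Hence each $V_{j,m}$ is open, $\mathcal{U}:=\bigcap_{j,m}V_{j,m}$ is a $G_\delta$ set, and — once each $V_{j,m}$ is known to be dense — $\mathcal{U}$ is dense by the Baire category theorem. Any $f\in\mathcal{U}$ then has $w_j\in\overline{\{q_{n,f}(\zeta):n\in\N\}}$ for all $j$, hence $\overline{\{q_{n,f}(\zeta):n\in\N\}}=\C$, which is the assertion.

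Everything therefore reduces to the following perturbation lemma: given $g\in H^2$, $\varepsilon>0$, $w\in\C$ and $N\in\N$, there are $f\in H^2$ with $\|f-g\|_{H^2}<\varepsilon$ and $n\geq N$ with $|q_{n,f}(\zeta)-w|<\varepsilon$. After replacing $g$ by a polynomial (polynomials are dense in $H^2$), the engine is the elementary exact identity
\[
q_{n,1/Q}=Q\qquad\text{whenever }Q\text{ is a polynomial with }\deg Q\leq n\text{ and no zeros in }\overline{\D},
\]
which holds because $Q\cdot(1/Q)-1\equiv0$, so that the minimum of $\|p\cdot(1/Q)-1\|_{H^2}$ over $p\in\mathcal{P}_n$ is $0$, attained at $Q$. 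Consequently $q_{n,1/Q}(\zeta)=Q(\zeta)$ can be prescribed arbitrarily, and by the continuity above the same is true, approximately, for any $f$ close to $1/Q$. The remaining issue is to keep such an $f$ within $\varepsilon$ of the given polynomial $g$. If $g$ happens to be outer one may take $Q$ to be a high-degree Taylor polynomial of $1/g$ modified by a term that is $H^2$-small but pins down the boundary value — for instance adding $c\,k^{-1}\sum_{j=0}^{k-1}(\bar\zeta z)^j$, which has $H^2$-norm $|c|k^{-1/2}$ yet value $c$ at $\zeta$ — and then one must check that the modified polynomial still has no zeros in $\overline{\D}$.

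The main obstacle is precisely this last point, and it is genuine for two reasons. First, to drive $q_{n,f}(\zeta)$ toward a value $w$ far from $1/g(\zeta)$ one is forced to make $f$ behave wildly near the single boundary point $\zeta$: a function that is analytic and bounded away from $0$ near $\zeta$ and $H^2$-close to $g$ cannot by itself produce this spreading of the values $q_n(\zeta)$, so the perturbation must create controlled near-singular behavior at $\zeta$ (via high-frequency terms, or a factor that nearly vanishes near $\zeta$) while staying small in $H^2$. Second, for a general polynomial $g$ — in particular one with zeros in $\D$, so that $f$ cannot be taken outer and $1/f\notin H^2$ — the exact identity is unavailable and one must estimate $q_{n,f}(\zeta)$ directly. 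The natural tool there is the representation $q_n=\overline{f(0)}\,\hat{\varphi}_n(n)\,\varphi_n^*$ of Theorem \ref{H2OP}, which on the circle gives $q_n(\zeta)=\overline{f(0)}\,\hat{\varphi}_n(n)\,\zeta^n\,\overline{\varphi_n(\zeta)}$, together with precise control of the (positive) leading coefficients $\hat{\varphi}_n(n)$ and of the boundary size $|\varphi_n(\zeta)|$ of the orthonormal polynomials for the perturbed weight $\tfrac1{2\pi}|f|^2\,d\theta$. The perturbation of $g$ should be engineered so that these asymptotics are readable and so that, for some $n\geq N$, both the modulus $|f(0)|\,\hat{\varphi}_n(n)\,|\varphi_n(\zeta)|$ and the phase $n\arg\zeta-\arg\varphi_n(\zeta)$ land near the target $w$. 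Carrying this out uniformly enough to treat an arbitrary $g\in H^2$ is the technical heart of the argument.
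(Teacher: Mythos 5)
This theorem is only quoted in the survey from \cite{BMS2}; the paper gives no proof here, so your proposal has to be judged against what a complete argument would require. Your Baire category scaffolding is correct and is indeed the right framework: the sets $V_{j,m}$ are open because $f\mapsto q_{n,f}(\zeta)=\sum_k(B(f)^{-1}y(f))_k\zeta^k$ is continuous on $H^2\setminus\{0\}$ (the Gram matrix depends continuously on $f$, is invertible for $f\neq 0$, and inversion is continuous), and the identity $q_{n,1/Q}=Q$ for a polynomial $Q$ with no zeros in $\overline{\D}$ and $\deg Q\leq n$ is exactly the right engine for producing prescribed values $q_n(\zeta)=Q(\zeta)$.

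The genuine gap is the density of each $V_{j,m}$, which you state as a ``perturbation lemma'' and then do not prove — you explicitly defer it as ``the technical heart of the argument.'' Concretely, what is needed is a simultaneous zero-free approximation statement: given an arbitrary $g\in H^2$, $\varepsilon>0$ and $w\in\C$, produce a polynomial $Q$ with no zeros in $\overline{\D}$ such that $\|1/Q-g\|_{H^2}<\varepsilon$ while $Q(\zeta)$ is within $\varepsilon$ of $w$. This is a nontrivial theorem (it is the content of the cited reference, whose title is precisely about simultaneous zero-free approximation), and your sketch does not deliver it: (i) for outer $g$, the Taylor polynomials of $1/g$ need not be zero-free on $\overline{\D}$, need not satisfy $\|1/T_k(1/g)-g\|_{H^2}\to 0$, and adding the boundary-pinning term $c\,k^{-1}\sum_{j<k}(\bar\zeta z)^j$ can destroy zero-freeness — you flag this but do not resolve it; (ii) for $g$ with zeros in $\D$ (or $g(0)=0$), the reduction to reciprocals of zero-free polynomials is not even set up, and the alternative route you gesture at — asymptotics of $\hat{\varphi}_n(n)$ and $|\varphi_n(\zeta)|$ for the perturbed measure $\tfrac{1}{2\pi}|f|^2\,d\theta$ — is a substantially harder program that you do not begin. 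Without the density lemma the Baire argument produces only a $G_\delta$ set, not a dense one, so the proof as written does not establish the theorem.
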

Thus clearly it is not the case that $q_n(\zeta)$ converges to $f(\zeta)$ for that particular point and for that large set of functions $f$.
However, if $f$ is well-behaved enough, pointwise convergence of the opas at points on the circle that are not zeros of $f$ does occur.  In fact, even more is true, as seen in the following.  

\begin{thm}[\cite{BMS}]\label{BMS_convergence}
Let $f$ be a polynomial with simple zeros that lie outside the open unit disk, and let $q_n$ be the $n$-th opa of $1/f$ in the Hardy space $H^2$ or the Bergman space $A^2.$ Then $1 - q_n f$ converges to $0$ uniformly on compact subsets of $\overline{\D} \setminus Z(f)$, where $Z(f)$ is the set of zeros of $f$.
\end{thm}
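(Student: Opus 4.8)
The plan is to combine the known $L^2$‑type decay of $q_nf-1$ in these two spaces with an algebraic identity, extracted from the orthogonality relations that define $q_n$, which pins down the polynomial $q_nf-1$ once it has been multiplied by the reverse polynomial of $f$. Write $d=\deg f$ and let $f^*(z)=z^d\,\overline{f(1/\bar z)}$ be the reverse polynomial of $f$; its zeros are the points $1/\overline{\zeta}$ with $f(\zeta)=0$, so they all have modulus at most $1$, with modulus exactly $1$ precisely at the zeros of $f$ on $\T$. In particular the zeros of $f^*$ on $\T$ form a subset of $Z(f)$. First I would record, via Theorem \ref{rate_convergence} applied with $\alpha=0$ and $\alpha=-1$ (its hypotheses hold since $f$ is a polynomial with no zeros in $\D$), that $\|q_nf-1\|_{H^2}\to 0$ and $\|q_nf-1\|_{A^2}=O(1/n)$; the reproducing‑kernel estimate $|g(w)|\le\|g\|\,\|k_w\|$ then shows that $g_n:=q_nf-1$ converges to $0$ uniformly on compact subsets of $\D$. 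Given a compact $K\subset\overline{\D}\setminus Z(f)$, I would choose $r\in(0,1)$ so large that every zero of $f^*$ of modulus $<1$ lies in $\{|z|<r\}$; on $K\cap\{|z|\le r\}$ the previous sentence already yields uniform convergence, so only the compact set $K_r:=K\cap\{r\le|z|\le 1\}$ remains, and there $f^*$ has no zeros at all (its zeros inside $\D$ lie in $\{|z|<r\}$, its zeros on $\T$ lie in $Z(f)$), hence $|f^*|\ge\delta>0$ on $K_r$.

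For the Hardy space the heart of the matter is a Fourier/reverse‑polynomial identity. Since $q_nf$ is the projection of $1$ onto $f\cdot\mathcal P_n$, one has $\langle g_n,z^kf\rangle=0$ for $k=0,\dots,n$ (cf.\ the proof of Theorem \ref{LinearSys}); equivalently, the Fourier coefficients of the trigonometric polynomial $g_n(e^{i\theta})\overline{f(e^{i\theta})}$ vanish at the indices $0,\dots,n$. As $g_n$ has degree at most $n+d$ and $\overline{f(e^{i\theta})}=e^{-id\theta}f^*(e^{i\theta})$, multiplying through by $e^{id\theta}$ shows that the polynomial $g_nf^*$ is supported in degrees $\{0,\dots,d-1\}\cup\{n+d+1,\dots,n+2d\}$, that is,
\[ g_n(z)\,f^*(z)=A_n(z)+z^{\,n+d+1}B_n(z),\qquad \deg A_n,\ \deg B_n\le d-1. \]
Multiplication by $f^*$ is bounded on $H^2$, so $\|g_nf^*\|_{H^2}\to 0$; the two summands occupy disjoint Fourier bands, hence are orthogonal, so $\|A_n\|_{H^2}\to 0$ and $\|B_n\|_{H^2}\to 0$; and since $A_n,B_n$ have degree $<d$, they tend to $0$ uniformly on $\overline{\D}$. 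Dividing by $f^*$ on $K_r$ (and using $|z|\le 1$) then gives $|g_n|\le(\|A_n\|_{\overline{\D}}+\|B_n\|_{\overline{\D}})/\delta\to 0$ uniformly on $K_r$, completing the $H^2$ case.

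For the Bergman space the inner product is weighted, $\langle z^m,z^\ell\rangle_{A^2}=\delta_{m\ell}/(m+1)$, so the orthogonality of $g_n=\sum_m b_mz^m$ to $z^kf$ no longer says that the Fourier coefficients of $g_n\overline f$ vanish. Instead, setting $G_n(z):=\sum_m\frac{b_m}{m+1}z^m$, a short computation shows that $\langle g_n,z^kf\rangle_{A^2}$ equals the $k$‑th Fourier coefficient of $G_n(e^{i\theta})\overline{f(e^{i\theta})}$, so the same manipulation yields $G_nf^*=A_n+z^{\,n+d+1}B_n$ with $\deg A_n,\deg B_n\le d-1$ and $\|A_n\|_{H^2},\|B_n\|_{H^2}\le\|f^*\|_\infty\|G_n\|_{H^2}\le\|f^*\|_\infty\|g_n\|_{A^2}\to 0$. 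On $K_r$ this controls $G_n$, but one must recover $g_n=(zG_n(z))'=G_n+zG_n'$, so $G_n'$ must be controlled as well. Differentiating the quotient $(A_n+z^{\,n+d+1}B_n)/f^*$ produces — besides terms that vanish for the reasons above — the term $(n+d+1)z^{\,n+d}B_n$, and here the \emph{rate} enters: from $\|g_n\|_{A^2}^2=\sum_m|b_m|^2/(m+1)=O(1/n^2)$ one gets $|b_m|=O(n^{-1/2})$ for all $m\le n+d$, whence (since the coefficients forming $B_n$ involve only $b_m/(m+1)$ with $m\ge n+1$) those coefficients are $O(n^{-3/2})$ and so $(n+d+1)\|B_n\|_{\overline{\D}}=O(n^{-1/2})\to 0$. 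Thus $G_n'\to 0$ uniformly on $K_r$, and hence so does $g_n$.

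I expect the Hardy‑space argument to be essentially formal once norm convergence is in hand, so the real obstacle is the Bergman‑space case: recovering $g_n$ from the auxiliary polynomial $G_n$ costs a derivative, which multiplies the high‑degree tail by a factor of order $n$, and the crucial point is that this factor is killed by the \emph{quantitative} decay $\|q_nf-1\|_{A^2}=O(1/n)$ supplied by Theorem \ref{rate_convergence} — this is where it matters that $f$ is a polynomial (more generally, analytic in $\overline{\D}$) with no zeros in $\D$. The simplicity of the zeros of $f$ is not actually needed for the division step, since $f^*$ is bounded below on $K_r$ irrespective of multiplicities; it is presumably retained for convenience or for a sharper quantitative statement.
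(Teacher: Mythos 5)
Your proof is correct, and it takes a genuinely different route from the one the survey attributes to \cite{BMS}. The survey does not reproduce that proof, but describes it as resting on estimates of the individual coefficients of $1-q_nf$, which are made tractable in $H^2$ and $A^2$ by exploiting the simplicity of the zeros of $f$ (via a partial-fraction decomposition of $1/f$ and the explicit structure of the linear system in Theorem \ref{LinearSys}); that is where the simplicity hypothesis is used. You instead extract directly from the orthogonality relations $\langle q_nf-1,z^kf\rangle=0$, $k=0,\dots,n$, the band-limited identity $g_nf^*=A_n+z^{n+d+1}B_n$ with $\deg A_n,\deg B_n\le d-1$ (and its Bergman analogue for the auxiliary polynomial $G_n$ obtained by dividing the $m$-th Taylor coefficient of $g_n$ by $m+1$), so that the norm decay of $g_n$ forces the two fixed-length blocks $A_n,B_n$ to vanish uniformly on $\overline{\D}$, and division by $f^*$ on the part of $K$ near $\T$ where $|f^*|\ge\delta$ finishes the argument; the reflected zeros of $f$ inside $\D$ are correctly handled by the reproducing-kernel estimate on $K\cap\{|z|\le r\}$. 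The one delicate point is the Bergman case, where recovering $g_n=G_n+zG_n'$ costs a derivative and hence a factor $n+d+1$ on the high band; your observation that the coefficients of $B_n$ involve only Taylor coefficients of $G_n$ of index at least $n+1$, combined with the quantitative rate $\|g_n\|_{A^2}=O(1/n)$ from Theorem \ref{rate_convergence} with $\alpha=-1$, yields $(n+d+1)\|B_n\|_{\overline{\D}}=O(n^{-1/2})$ and closes that gap correctly. What your approach buys is that it dispenses entirely with the simplicity of the zeros (only the lower bound $|f^*|\ge\delta$ on the annular compact set is used, which is insensitive to multiplicities) and isolates exactly where the rate of decay is needed; what the explicit coefficient computation of \cite{BMS} buys is finer pointwise asymptotic information about $1-q_nf$ on the circle, which your softer argument does not recover.
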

The proof of this result is somewhat technical and relies on estimates of the coefficients of $1 - q_n f$ which are tractable in the Hardy and the Bergman space.  It seems likely based on preliminary work that the result holds for any polynomial with no zeros in $\D$, but the computations become more complicated. It would be interesting to know whether these results still hold for more general spaces and how ``good" the functions have to be in order to get convergence versus the universality behavior observed in Theorem \ref{universality}.  Thus we conclude with the following open questions. 

\begin{OQ}\label{convergence_pols} 
Suppose $f$ is a polynomial with zeros $Z(f)$ in the complement of $\D$ and let $q_n$ be the $n$-th opa of $1/f$ in $H^2_{\omega}.$  For which weights $\{ \omega_k \}_{k \in \N}$ is it true that $1 - q_n f$ converges to $0$ uniformly on compact subsets of $\overline{\D} \setminus Z(f)$? 
\end{OQ}

\begin{OQ}\label{convergence_goodfn}
Does Theorem \ref{BMS_convergence} hold for functions other than polynomials, and if so, for which ones? On the other hand, can we characterize the functions for which the universality behavior as in Theorem \ref{universality} occurs?
\end{OQ}

%\noindent\textbf{Acknowledgements.} 

\end{document}